\documentclass[11pt]{article}
\usepackage{amsmath}    
\usepackage{amsthm}    
\usepackage{amssymb}   
\usepackage{geometry}  
\usepackage{hyperref} 
\usepackage{mathrsfs}  
\usepackage{enumitem}  
\usepackage{xcolor}
\usepackage{graphicx}
\usepackage{tikz}
\usepackage{subcaption}
\usepackage{asymptote}
\usetikzlibrary{positioning,arrows.meta,fit,calc,shapes.multipart}
\usepackage{array,makecell,booktabs}
\newcolumntype{L}[1]{>{\raggedright\arraybackslash}p{#1}}
\newcolumntype{C}[1]{>{\centering\arraybackslash}p{#1}}
\newcommand{\thickhline}{\specialrule{1.5pt}{0pt}{0pt}}  
\newcommand{\medhline}{\specialrule{0.8pt}{0pt}{0pt}}    
\renewcommand{\footnotesize}{\tiny}

\newtheorem{theorem}{Theorem}[section]
\newtheorem{lemma}[theorem]{Lemma}
\newtheorem{proposition}[theorem]{Proposition}
\newtheorem{corollary}[theorem]{Corollary}
\newtheorem{definition}[theorem]{Definition}
\newtheorem{remark}[theorem]{Remark}

\newcommand{\R}{\mathbb{R}} 
 
\newcommand{\Z}{\mathbb{Z}} 
\newcommand{\Q}{\mathbb{Q}} 
\newcommand{\RQ}{\mathbb{R}\setminus\mathbb{Q}} 
\newcommand{\D}{\mathbb{D}}
\newcommand{\dirich}{\overline{\Omega}} 
\newcommand{\F}{F_4^*}
\newcommand{\T}{\mathrm{T}}
\newcommand{\A}{\mathcal{A}}
\newcommand{\C}{\mathcal{C}}
\newcommand{\mubound}{\frac{19425+111\sqrt{26565}}{2030}} 

\begin{document}

\title{An Improved Upper Bound for the Dirichlet Spectrum in Diophantine Approximation}
\author{Zixuan Peng, Siyuan Wang, Ethan Wang}
\date{\today}
\maketitle

\begin{abstract}
We study the continuous part of the Dirichlet spectrum \(\mathbb{D}\) and improve the best previously published upper bound for the ray-origin constant $\delta$. Building on and refining Ivanov's approach from \cite{ivanov1980}, we introduce a Cantor-type set \(\F\) defined by certain restrictions on partial quotients. For its thickness, we prove \(\tau(\log(\F))>1\), and apply sum-set results for Cantor sets to prove that the set $\F\cdot \F$ is an interval. Finally, we establish a new upper bound
 $\delta\le \frac{111(397+\sqrt{26565})}{65522}\approx0.94866$.\end{abstract}

\section{Introduction}
\label{sec:intro}
We recall standard notation and results from the theory of continued fractions and the Dirichlet spectrum.
For $\alpha \in \R \setminus \Q$, consider the \emph{irrationality measure function}
\[
\psi_\alpha(t) = \min_{1 \leq q \leq t} \|q\alpha\|, \quad \|x\| = \min\{\{x\}, 1-\{x\}\}.
\]
The \emph{Dirichlet constant} for an irrational number $\alpha$ is defined as
\[
d(\alpha) = \limsup_{t \to \infty} t \psi_\alpha(t).
\]
The set
\[
\mathbb{D} = \{ d = d(\alpha),\,\,\, \alpha \in \mathbb{R}\}
\]
of all values of $d(\alpha)$ is known as the Dirichlet spectrum.

\vskip+0.3cm
We give a brief description of the main results dealing with the structure of $\mathbb{D}$.
It is well known that
$
\psi_\alpha(t) \leq \frac{1}{t},\ \forall t\in\R \text{ and so } \D \subset [0, 1].
$
In 1937, G. Szekeres showed in \cite{szekeres} that
$$\mathbb{D}\subset \left[\frac{5+\sqrt{5}}{10},1\right].$$
Morimoto  \cite{morim}  discovered a discrete
subset 
$$D_k=d\left(\alpha_k\right)=\frac{2\alpha_k+1}{2\alpha_k+2},\text{ where } \alpha_0=[\overline{1}], \alpha_k=[\overline{\underbrace{1;1,\dots,1}_{2k-1},2}]$$
of minimal values from Dirichlet spectrum $\mathbb{D}$.
Lesca in his thesis \cite{lesca} gave a complete proof of the equality 
$$\mathbb{D}\cap\left[\frac{5+\sqrt{5}}{10},\frac{1+\sqrt{5}}{4}\right)=\{D_j\mid j\in\mathbb{Z}_{>0}\}.$$
 \\
 As for the structure of $\mathbb{D}$ close to 1,
in \cite{DN} Diviš and Novák proved the following result.
\begin{theorem}
There exists $d^{*} < 1$ such that $[d^*, 1] \subset \mathbb{D}$.
\end{theorem}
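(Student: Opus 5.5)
Our plan is to express $d(\alpha)$ through the continued fraction expansion of $\alpha$ and then use a Hall-type sum/product theorem for Cantor sets to show that the attainable values fill an interval ending at $1$.

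\emph{Step 1: a formula for $d(\alpha)$.} Write $\alpha=[a_0;a_1,a_2,\dots]$ with convergents $p_n/q_n$. The function $\psi_\alpha$ is a step function which jumps only at $t=q_{n+1}$, and on $[q_n,q_{n+1})$ it equals $\|q_n\alpha\|$. Hence
\[
d(\alpha)=\limsup_{n\to\infty} q_{n+1}\|q_n\alpha\|.
\]
The classical identities $q_{n+1}/q_n=[a_{n+1};a_n,\dots,a_1]$ and $q_n\|q_n\alpha\|=\bigl([a_{n+1};a_{n+2},\dots]+[0;a_n,\dots,a_1]\bigr)^{-1}$ then give
\[
d(\alpha)=\limsup_{n\to\infty}\frac{1}{1+\beta_n\gamma_n},
\qquad \beta_n=[0;a_{n+2},a_{n+3},\dots],\quad \gamma_n=[0;a_n,\dots,a_1].
\]
So $d(\alpha)$ close to $1$ means that $\beta_n\gamma_n$ is small, which happens when $a_{n+2}$ and $a_n$ are large.

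\emph{Step 2: a Cantor set and its thickness.} Fix a large $N$ and let $F_N$ be the set of $[0;b_1,b_2,\dots]$ with all $b_i\ge N$. This is a Cantor set contained in roughly $[0,1/N]$. Its gaps are relatively small compared with the adjacent bridges, so its Newhouse thickness tends to infinity as $N\to\infty$. The estimate needed is only the routine comparison of the gaps between consecutive cylinders with the cylinder lengths. Working instead with $\log F_N$, the thickness also stays large, since $\log$ has bounded distortion on $F_N\subset[1/(N+1),1/N]$. By the Newhouse gap lemma (Hall-type theorem), if $\tau(\log F_N)\ge 1$ then $\log F_N+\log F_N$ is an interval. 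Hence $F_N\cdot F_N$ is an interval $[c_N,C_N]$ with $C_N$ small, of order $N^{-2}$.

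\emph{Step 3: realizing each value.} Given $x\in[c_N,C_N]$, choose $\beta=[0;b_1,b_2,\dots]$ and $\gamma=[0;c_1,c_2,\dots]$ in $F_N$ with $\beta\gamma=x$. Build $\alpha$ from blocks
\[
c_k,\dots,c_1,\;A,\;b_1,\dots,b_k,\;\text{(padding of 1's)},
\]
with $k\to\infty$. Here $A$ is a fixed large quotient. At the position $n$ just before $A$, $\beta_n$ and $\gamma_n$ approximate $\beta$ and $\gamma$, up to tails that are controlled because the padding sits far out. So $\limsup\ge (1+x)^{-1}$. One can also attach $A$ to give $\gamma_n\approx[0;c_1,\dots]$; the precise bookkeeping of which index is $n$ is handled by the choice of block.

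\emph{Step 4: matching the upper bound (the main obstacle).} We must show that every other index $m$ gives $\beta_m\gamma_m\ge x-o(1)$. Indices inside the padding of 1's give $\beta_m\gamma_m\approx$ constants of order $1$, hence much smaller $d$. The danger lies at indices inside the $b$- and $c$-blocks, where both neighbours are $\ge N$ and the product could dip below $x$. To handle this, we would first try restricting $F_N$ to $b_i\in\{N,N+1\}$ but inserting the separating quotient $A$ so that the neighbours there are small. A cleaner option is to space large quotients so that at most one of $a_n,a_{n+2}$ is large except at the designated position. This forces the designated product to be minimal, and the thickness argument of Step 2 must then be redone for the restricted alphabet. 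We expect this to be the crux. Taking $d^*=(1+c_N)^{-1}$ and noting that $(1+C_N)^{-1}\to1$, a union over $N$ (or a direct extension to the endpoint $1$, with $d=1$ attained by unbounded quotients) yields $[d^*,1]\subset\D$.
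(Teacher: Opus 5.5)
The paper does not actually prove this statement. It is quoted from Divi\v{s}--Nov\'ak, and it also follows from Ivanov's bound $\mu\le 10+6\sqrt2$ via $\delta=\mu/(1+\mu)$. Your architecture (a Cantor set of continued fractions, Hall's theorem applied to its logarithm, then gluing a reversed half to a forward half) is the one the paper uses in Section 3. However, two of your steps fail.

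\textbf{Step 1.} The final formula is wrong. From your two (correct) identities, with $\gamma_n=[0;a_n,\dots,a_1]$,
\[
q_{n+1}\|q_n\alpha\|=\frac{a_{n+1}+\gamma_n}{a_{n+1}+\gamma_n+\beta_n}=\Bigl(1+\beta_n\,[0;a_{n+1},a_n,\dots,a_1]\Bigr)^{-1}.
\]
So the two factors meet at the \emph{adjacent} quotients $a_{n+1},a_{n+2}$, exactly as in the paper's $\rho(\alpha)=\limsup_n[x_n;x_{n-1},\dots]\,[x_{n+1};x_{n+2},\dots]$. For $\alpha=\sqrt3-1=[0;\overline{1,2}]$ your formula gives $2(4+\sqrt3)/13\approx0.882$, whereas $d(\alpha)=(3+\sqrt3)/6\approx0.789$. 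Consequently the separator $A$ in Step 3 must go. With it, the junction realizes $[0;A,c_1,\dots]\,[0;b_1,\dots]$ or $[0;c_1,\dots]\,[0;A,b_1,\dots]$, not $\beta\gamma$. The paper glues $x_0$ directly to $y_0$.

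\textbf{Step 2.} This step is false, and this is the fatal gap. In $F_N=\{[0;b_1,b_2,\dots]:b_i\ge N\}$, the cylinder $\{b_1=N\}$ lies in $[N/(N^2+1),1/N]$, of length about $N^{-3}$. The gap $\bigl(1/(N+1),N/(N^2+1)\bigr)$ separating it from $\{b_1=N+1\}$ has length about $N^{-2}$. Hence $\tau(F_N)\le\frac{N+1}{N(N-1)}\to0$, not $\infty$. In fact $F_N\cdot F_N$ is not an interval:
\begin{itemize}
\item products with $(b_1,b_1')\ne(N,N)$ are $<1/(N(N+1))$;
\item products with $b_1=b_1'=N$ are $>N^2/(N^2+1)^2$;
\item for $N\ge3$, the whole interval between these two numbers is missed.
\end{itemize}
Restricting to $\{N,N+1\}$, as you suggest in Step 4, only makes the set thinner, and Step 4 itself is left open.

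\textbf{The missing idea} is to make \emph{only the two junction quotients} large. Take sets $K+E$, where $K$ is a large integer and $E$ is the set of continued fractions whose partial quotients are bounded by a fixed $M$, with $M$ large enough that $\tau(E)>1$.
\begin{itemize}
\item Since $\log$ has distortion at most $(K+1)/K$ on $K+E$, you get $\tau(\log(K+E))>1$ for large $K$.
\item Hall's theorem then makes $(K+E)(K'+E)$ an interval for $K'\in\{K,K+1\}$.
\item Since the convex hull of $E$ has length $>1/2$, these intervals overlap for large $K$ and cover a ray in $\overline{\Omega}$.
\end{itemize}
Your Step 4 then becomes trivial. At every non-junction index, one factor has integer part at most $M$, so that term is $<(M+1)(K+2)\le K^2$ for $K\ge M+3$. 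The junction value exceeds $K^2$, so the junction dominates. This is the analogue of the paper's forbidden pair $(4,4)$, which makes the junction $(x_0,y_0)$ the dominant position.

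\textbf{Minor.} $x\in[c_N,C_N]$ gives $d\in[(1+C_N)^{-1},(1+c_N)^{-1}]$, so your $d^*$ is the wrong endpoint. Also, any union over a parameter requires overlaps between consecutive intervals, which you never check.
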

$\text{}$\\
 The purpose of the present paper is to study the value
$$
\delta = \inf\{d\in \mathbb{R}: \,\,\ (d,1]\subset \mathbb{D}\}.
$$
The reader may find in \cite{dirich,p} a more detailed overview of the current progress regarding the Dirichlet spectrum.

\vskip+0.3cm

The basic tool to study the Dirichlet spectrum is the machinery of continued fractions.
Let $\alpha=[x_0;x_1,x_2,\dots]\in \RQ$ be represented as an ordinary continued fraction.
Define
$$\alpha_n = [x_n; x_{n+1}, x_{n+2}, \dots]\,\,\,\,\text{and}
\,\,\,\,
\alpha_n^* = [0; x_n, x_{n-1}, \dots, x_0].$$
Then
$$\psi_\alpha(t)=\frac{1}{q_{n+1}\left(1+\frac{\alpha^*_{n+1}}{\alpha_{n+2}}\right)}\,\,\,\,\,
\,\,\,\,\,\,\forall q_n\le t< q_{n+1}.
$$
The last equality is known as Perron's formula. From this formula we conclude that 
\[
d(\alpha)=\limsup_{\nu\to\infty}\frac{1}{1+\dfrac{\alpha_{\nu+1}^*}{\alpha_{\nu+2}}}
=\frac{1}{1+\dfrac{1}{\displaystyle\limsup_{\nu\to\infty}\dfrac{\alpha_{\nu+1}}{\alpha_{\nu}^*}}}
= \frac{1}{1+\dfrac{1}{\rho(\alpha)}},
\]
where
\[
\rho(\alpha):=\limsup_{\nu\to\infty}\frac{\alpha_{\nu+1}}{\alpha_{\nu}^*}= \limsup_{n \to \infty} \left([x_n; x_{n-1}, \ldots, x_0] \cdot [x_{n+1}; x_{n+2}, \ldots]\right).
\]
Let
$$\dirich = \{ \rho(\alpha) \mid \alpha \in \RQ \}
$$
and
$$
\mu = \inf \{ z \in \R \mid [z, +\infty) \subset \dirich \}.$$
Since $\delta=\frac{\mu}{1+\mu},$ it suffices to bound $\mu$ to obtain a bound on $\delta$.\\

In \cite{ivanov1980}
Ivanov proved the following result.
\begin{theorem}
$
\frac{5 + 3\sqrt{5}}{2} \approx 5.8541\ldots \leq \mu \leq 10 + 6\sqrt{2} \approx 18.4852\ldots
$
\end{theorem}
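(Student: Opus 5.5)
The two inequalities in Ivanov's theorem have quite different natures, so I would prove them separately. The upper bound $\mu\le 10+6\sqrt2$ calls for a constructive realization argument. The lower bound $\mu\ge \frac{5+3\sqrt5}{2}$ calls for exhibiting a gap in $\dirich$ just below that value.

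\textbf{Upper bound.} The plan follows Hall's method.
\begin{enumerate}
\item Let $F(4)$ be the Cantor set of numbers $[0;a_1,a_2,\dots]$ with all $a_i\le 4$. By Hall's theorem, $F(4)+F(4)$ is an interval; applied to thickness of $\log$-images, this also shows that products of the form $[a;\dots]\cdot[b;\dots]$ with tails in $F(4)$ fill an interval.
\item Given a target $z\ge 10+6\sqrt2$, choose a large integer $a$ and write $z=\bigl(a+x\bigr)\bigl(b+y\bigr)$ with $x,y\in F(4)$. The point is that $a+x$ and $b+y$ range over intervals whose product covers $[z,z')$. Letting $a$ and $b$ vary over integers, these intervals tile $[10+6\sqrt2,\infty)$.
\item Build $\alpha$ whose partial quotients are blocks $a$ (reversed prefix giving $[a;\,\text{digits of }x]$) followed by digits of $y$, separated by increasingly long strings of small partial quotients. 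Arrange that the limsup of $[x_n;x_{n-1},\dots][x_{n+1};\dots]$ is attained exactly at the positions of the large partial quotient and equals $z$ in the limit.
\item Check that at every other position the product stays strictly below $z$. Positions inside the blocks with quotients $\le 4$ give products at most about $(4+1)(4+1)$-type bounds. These are dominated once $z\ge 10+6\sqrt2$.
\end{enumerate}
The constant $10+6\sqrt2=(\sqrt2+1)^2\cdot 2\cdot$(appropriate factor) should arise as the threshold where the smallest admissible large quotient still beats the contribution of the $\le 4$ blocks. The main obstacle is this bookkeeping. One must show the other positions do not exceed $z$ while the product set coverage remains an interval, with no gaps between consecutive $a$.

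\textbf{Lower bound.} Here one shows that some interval ending at $\frac{5+3\sqrt5}{2}=\phi^2\sqrt5\cdot$(normalized) misses $\dirich$.
\begin{enumerate}
\item Note that $\frac{5+3\sqrt5}{2}=[2;\overline1]\cdot[\,\overline{1}\,]\cdot$ some arrangement. More precisely, it is the value of $\rho$ for a specific eventually periodic sequence, for instance $[\overline{1}]$ with an inserted $2$ pattern.
\item Do a case analysis on partial quotients. If infinitely many $x_n\ge 3$, the product at those $n$ exceeds $\frac{5+3\sqrt5}{2}$ by a definite amount, landing above the gap. If eventually all $x_n\le 2$, then compare the possible local patterns around each $2$ using the standard monotonicity of continued fractions (alternating comparison in even and odd positions). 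This shows $\rho$ either equals the endpoint or is at most a strictly smaller value.
\end{enumerate}
Thus a left-neighbourhood of $\frac{5+3\sqrt5}{2}$ is not contained in $\dirich$, giving $\mu\ge\frac{5+3\sqrt5}{2}$. The delicate part is the finite pattern analysis for sequences in $\{1,2\}$, which I would organize by the lengths of runs of $1$'s between consecutive $2$'s.
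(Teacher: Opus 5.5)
The paper does not prove this statement; it quotes it from Ivanov. Its proof of Theorem~\ref{thm:main} does show the shape the upper-bound argument must take. Measured against that, and against direct computation, both halves of your sketch have genuine gaps.

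\textbf{Upper bound.} With tails drawn from the unrestricted set $F(4)$, the competing cuts are not dominated. Two adjacent $4$'s inside a tail give a product as large as $[\overline{4;1}]^2=(2+2\sqrt2)^2=12+8\sqrt2\approx 23.3$. Your own estimate $(4+1)(4+1)=25$ already exceeds $10+6\sqrt2\approx 18.49$. So step 4 fails for all $z\in[10+6\sqrt2,\,12+8\sqrt2)$.

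The missing idea is the restriction that actually produces the constant. Work with $[4;x_1,x_2,\dots]$, $x_i\in\{1,2,3,4\}$, with no two consecutive $4$'s. Glue two such numbers so that the junction $(x_0,y_0)=(4,4)$ is the only occurrence of $(4,4)$, in the same way as in the proof of Theorem~\ref{thm:main}. At every other cut one factor has integer part at most $3$, so that cut is bounded by $[\overline{4;1}]\cdot[3;\overline{1,4}]=(2+2\sqrt2)(1+2\sqrt2)=10+6\sqrt2$. Your guess $(\sqrt2+1)^2\cdot2\cdot(\dots)$ does not capture this.

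Hall's theorem for $F(4)$ says nothing about the product set of this restricted Cantor set. You still need:
\begin{itemize}
\item a separate proof that its logarithm has thickness at least $1$ (the analogue of Theorems~\ref{thm:F4} and~\ref{thm:logF4}, which is the real technical core);
\item overlap of the product intervals as the integer parts grow;
\item a check of the cuts next to each junction.
\end{itemize}

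\textbf{Lower bound.} The extremal configuration is misidentified. We have $[2;\overline1]\cdot[\overline1]=\phi^3=2+\sqrt5\approx 4.236$, whereas $\frac{5+3\sqrt5}{2}=3\phi+1=[3;\overline1]\cdot[\overline1]$. So the critical pattern is an isolated $3$ inside long runs of $1$'s. This breaks your dichotomy, and in fact the target itself is false.

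Let $\alpha=[\overline{3;1,\dots,1}]$ with $m$ ones in the period, $m$ odd. The largest cuts are the two adjacent to each $3$, and both give $\rho(\alpha)=3R_m+1$, where $R_m=[\overline{1;1,\dots,1,3}]$ ($m$ ones, then $3$). The $3$ sits at the odd index $m$, so $R_m<\phi$. Hence $\rho(\alpha)<\frac{5+3\sqrt5}{2}$ even though $\alpha$ has infinitely many $3$'s. For $m=3$ one gets $R_3=\frac{9+\sqrt{165}}{14}$ and $\rho(\alpha)\approx 5.68$.

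Since $R_m\uparrow\phi$, these values increase to $c=\frac{5+3\sqrt5}{2}$. So no interval $(c-\varepsilon,c)$ is disjoint from $\dirich$. Your claim about eventually-$\{1,2\}$ sequences also fails: an isolated pair $2,2$ in long runs of $1$'s gives values near $\phi^4\approx 6.85>c$.

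To prove $\mu\ge c$ you must exhibit points outside $\dirich$ accumulating at $c$ from below. That requires a much finer analysis of which sequences can have $\rho$ close to $c$ than the run-length comparison you outline.
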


Now we formulate our main result and its corollary which improve on the result by Ivanov.
\begin{theorem}[Main Theorem]
\label{thm:main}
The constant $\mu$ satisfies the inequality
\[
\mu \leq \mubound\approx18.4811 .
\]
\end{theorem}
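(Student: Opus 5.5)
We follow Ivanov's scheme, replacing his Cantor set by the set $\F$ from the abstract. Let $F_4^*$ denote the set of numbers $[0;a_1,a_2,\dots]$ whose partial quotients lie in $\{1,2,3,4\}$ and avoid a finite list of forbidden patterns. The patterns are chosen so that a bi-infinite word built around a large central block has its $\limsup$ in the formula for $\rho(\alpha)$ attained exactly at the central position. We look for $\rho$-values of the form
\[
\rho = [A; a_{-1},a_{-2},\dots]\cdot[B; b_1,b_2,\dots],
\]
where $A,B$ are large fixed partial quotients (or a short fixed block), and the tails $[0;a_{-1},\dots]$ and $[0;b_1,\dots]$ range independently over $\F$ (or an affine image of it). Then the set of attained values contains
\[
\{(A+x)(B+y) : x,y\in \F\} = (A+\F)\cdot(B+\F).
\]
Taking logarithms turns this product into the sum set $\log(A+\F)+\log(B+\F)$.

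The key steps are as follows.
\begin{enumerate}
\item Compute the thickness of $\F$ by the standard gap/bridge analysis of the continued-fraction Cantor construction. We then show $\tau(\log(A+\F))>1$: since $\log$ is smooth and monotone on a short interval, thickness is distorted only by a factor close to $1$, so a bound $\tau(\F)$ comfortably above $1$ suffices. This is the claim $\tau(\log\F)>1$ stated in the abstract.
\item Apply Newhouse's gap lemma, or the sum-set criterion: if $\tau(K_1)\tau(K_2)\ge1$ and neither set lies in a gap of the other, then $K_1+K_2$ is an interval. This is checked by comparing the diameters and the largest gaps. It gives that $(A+\F)(B+\F)$ is an interval $[m_A,M_A]$.
\item Let $A$ range over all integers $\ge A_0$ (with $B$ tied to $A$, e.g.\ $B=A$ or $B=A+1$). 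We check that consecutive intervals overlap, $M_A\ge m_{A+1}$, so their union is a ray $[m_{A_0},\infty)$.
\item Build the full sequence $\alpha$ by concatenating, for $n\to\infty$, blocks approximating the desired pair of tails. This makes $\rho(\alpha)$ equal to the target value and realises every point of the ray.
\item Identify $\mu\le m_{A_0}$. The quadratic irrational $\mubound$ is the product of the smallest admissible tail values, which are periodic continued fractions in $\F$. This explains the $\sqrt{26565}$.
\end{enumerate}

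The main obstacle is step 4 together with the choice of forbidden patterns. We must verify that for every position $n$ other than the chosen central ones, the product $[x_n;x_{n-1},\dots]\cdot[x_{n+1};\dots]$ is strictly smaller in the limit. This must hold both inside the $\F$ tails, which is easy since all quotients there are $\le4$, which is far below $A$, and at the junctions between consecutive blocks. Junctions where a large quotient $A$ meets a neighbouring tail are the delicate case: we need lexicographic comparison arguments showing the central position dominates. The restrictions defining $\F$ are exactly what is needed for these comparisons. We would first attempt the comparison by the alternating-order rule for continued fractions, reducing each case to finitely many prefix checks.

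A secondary difficulty is keeping the thickness above $1$ despite the forbidden patterns. This requires an explicit worst-case bridge/gap ratio computation over all admissible prefixes, done by reducing the infinitely many cases to a few extremal periodic tails.
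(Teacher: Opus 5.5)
You have the right architecture: a centre flanked by two tails from a restricted Cantor set, $\tau(\log\F)>1$, a Hall-type sum theorem, and block concatenation. But the mechanism that decides which $\rho$-values are realised, and hence the constant, is wrong, so the argument as proposed cannot give $\mu\le\mubound$.

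If the central quotients really dominate the tails, i.e.\ $A,B\ge5$, then every central value $(A+x)(B+y)$ exceeds $25$, far above $18.48$. To land in the right range the centre must be $(x_0,y_0)=(4,4)$, which equals the largest tail quotient. This is what the paper does: its $\F$ consists of the numbers $[4;3,x_2,\dots]$ with $x_i\in\{1,2,3,4\}$ avoiding $(4,4)$ and $(4,1,4,1,4)$.

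Domination is then not a size comparison of quotients. The centres are simply the only places in $x=[S_1,S_2,\dots]$ where two $4$'s are adjacent. This holds because $(4,4)$ is forbidden inside $\F$, and consecutive blocks $S_i=(x_{n_i},\dots,x_0,y_0,\dots,y_{m_i})$ are glued at entries $y_{m_i},x_{n_{i+1}}\neq4$. At every other position one factor of $\rho_n$ has integer part at most $3$. The exclusion of $(4,1,4,1,4)$ then forces the $\limsup$ of $\rho_n$ over these positions to be at most $[\overline{4;1,4,1,3,1}]\cdot[\overline{3;1,4,1,4,1}]=\mubound$. With only $(4,4)$ excluded, the same supremum is $[\overline{4;1}]\cdot[3;\overline{1,4}]=10+6\sqrt2$, which is exactly the old bound.

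So your step 5 misidentifies the constant. $\mubound$ is the threshold above which the centres carry the $\limsup$; it is not the product of the smallest tails. The bottom of $\F\cdot\F$ is $[4;3,\overline{1,4,1,4,1,3}]^2\approx18.158$. For targets below $\mubound$ your construction gives no control over the non-central positions. What the argument actually proves is only that every $\alpha\in\F\cdot\F$ with $\alpha\ge\mubound$ lies in $\dirich$, so ``$\mu\le m_{A_0}$'' is unjustified.

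Step 3 is not how the ray is obtained either, and it cannot work for a set of this kind. $\F$ has diameter about $0.05$, so $\F\cdot\F\approx[18.158,18.592]$, and intervals attached to consecutive centres could never overlap. The paper only checks $[\mubound,10+6\sqrt2]\subset\F\cdot\F$ and takes $[10+6\sqrt2,\infty)$ from Ivanov's theorem.

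The same narrowness is what rescues your step 1. The thickness is only about $1.0169$, not comfortably above $1$. It survives the passage to $\log$ only because $\log$ has distortion about $1.012$ on an interval of length $0.05$ near $4.3$. The choice of forbidden words is exactly the trade-off between lowering the non-central supremum and keeping the thickness above $1$.
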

\begin{corollary}
    $\delta\le \frac{111(397+\sqrt{26565})}{65522}\approx0.94866 .$
\end{corollary}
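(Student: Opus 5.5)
The corollary is immediate from Theorem~\ref{thm:main}: since $\delta=\frac{\mu}{1+\mu}$ and $x\mapsto \frac{x}{1+x}$ is increasing on $(0,\infty)$, it suffices to substitute $\mu\le \mubound$. Writing $M=\mubound$, we have
\[
\frac{M}{1+M}=\frac{19425+111\sqrt{26565}}{21455+111\sqrt{26565}}.
\]
We then rationalize the denominator, multiplying numerator and denominator by $21455-111\sqrt{26565}$. The denominator becomes
\[
21455^2-111^2\cdot 26565=460317025-327307365=133009660 .
\]
For the numerator we expand $(19425+111\sqrt{26565})(21455-111\sqrt{26565})$. The rational part is
\[
19425\cdot 21455-327307365=416763375-327307365=89456010,
\]
and the coefficient of $\sqrt{26565}$ is $111(21455-19425)=111\cdot 2030$.

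Next we factor out $2030$. One checks that $89456010=2030\cdot 44067$ and $44067=111\cdot 397$, so the numerator equals $2030\cdot111\,(397+\sqrt{26565})$. Similarly $133009660=2030\cdot 65522$. Cancelling $2030$ gives
\[
\delta\le\frac{111(397+\sqrt{26565})}{65522}.
\]
Numerically, $\sqrt{26565}\approx162.988$, so the bound is $\approx 111\cdot 559.988/65522\approx 0.94866$.

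The only real content is Theorem~\ref{thm:main} itself; the remaining risk is arithmetic slips in the factorization, which the divisibility checks above guard against.
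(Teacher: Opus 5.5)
Your proposal is correct and follows the paper's route exactly: the paper obtains the corollary directly from the Main Theorem via $\delta=\frac{\mu}{1+\mu}$ and the monotonicity of $x\mapsto\frac{x}{1+x}$. Your rationalization, which the paper leaves implicit, checks out, including $89456010=2030\cdot 111\cdot 397$ and $133009660=2030\cdot 65522$.
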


Our proofs develop the approach from \cite{ivanov1980} and are related to a deeper analysis of the corresponding Cantor-type sets of continued fractions with bounded partial quotients.

\section{Cantor Sets and Thickness}

We begin with the following.

\begin{definition}
A \textbf{Cantor-type set} $\mathcal{E}$ on a closed interval $I$ is defined as
\begin{equation}\label{1q}
\mathcal{E} = I \setminus \bigcup_{\alpha \in \mathbb{Z}_{\ge 0}} U_\alpha,
\end{equation}
where $\{U_{\alpha}\}_{\alpha\in \mathbb{Z}_{\geq 0}}$ is a countable set of mutually disjoint open intervals representing the ``gaps" removed from the original interval.
\end{definition}

Before analyzing these sets further, we establish a notation for their thickness. We will use $|A|=\sup A-\inf A$ to denote the length of an interval $A$.

A crucial property of any Cantor-type set is its ``thickness", which quantifies how much of the solid interval remains compared to the size of the gaps being removed. Intuitively, a thicker Cantor-type set has relatively small holes. 

\begin{definition}
Let the set $\mathcal{E} $ be of the form (\ref{1q}) and the order of the  intervals $U_\alpha$ be fixed.
For any removed gap $U_k$, let $I_k$ be the largest closed interval in
\[
\mathcal{E}_k = I \setminus \bigcup_{i=1}^{k-1} U_i
\]
that contains $U_k$. Denote by $I_k^{1}$ and $I_k^{2}$ the two neighbouring maximal subsegments of $I_k \setminus U_k$.\\
The thickness $\tau(\mathcal{E})$ is said to be at least $\tau$ if the inequality 
\[
\tau \cdot |U_k| \le \min\left\{ |I_k^{1}|, |I_k^{2}|\right\}
\]
holds for all $k$.
\end{definition}
\text{}\\
In the following, let $A+B=\{a+b\mid a\in A,\;b\in B\}$, $A\cdot B = \{ab\mid a\in A,\; b\in B\}$, and $\log A= \{ \log x \mid x \in A\}$, for sets $A,B\subseteq \R$.

\begin{theorem}[Hall Jr., Theorem 2.2 in~\cite{hall1947}]
Let $\mathcal{E}_1,\mathcal{E}_2$ be 2 Cantor-type sets defined on segments A and B such that:
$\tau(\mathcal{E}_1),\tau(\mathcal{E}_2) \geq 1, $
and $\frac{1}{3}\le\frac{|A|}{|B|}\le 3.$
Then $\mathcal{E}_1 + \mathcal{E}_2=A+B$.
\end{theorem}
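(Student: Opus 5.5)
The plan is to reduce the sum-set statement to an intersection statement for two thick Cantor sets, that is, to Newhouse's gap lemma, and then prove that lemma by the standard linked-gaps argument.

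\emph{Reduction.} The inclusion $\mathcal{E}_1+\mathcal{E}_2\subseteq A+B$ is trivial. Conversely, fix $x\in A+B$. Then $x\in\mathcal{E}_1+\mathcal{E}_2$ if and only if $\mathcal{E}_1\cap(x-\mathcal{E}_2)\neq\emptyset$. The set $x-\mathcal{E}_2$ is a Cantor-type set on the segment $x-B$. It is a reflection and translation of $\mathcal{E}_2$, so it still has thickness at least $1$. Moreover $x\in A+B$ means precisely that $A\cap(x-B)\neq\emptyset$.

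\emph{Neither set lies in a gap of the other.} Take a bounded gap $U$ of $\mathcal{E}_1$. From $\tau(\mathcal{E}_1)\ge 1$ we get $|U|\le |I^1|,|I^2|$, hence $|U|\le |A|/3$. Combined with $|A|\le 3|B|$ this gives $|U|\le |B|=|x-B|$. An open interval of length at most $|B|$ cannot contain the closed segment $x-B$. By the symmetric inequality $|B|\le 3|A|$, no gap of $x-\mathcal{E}_2$ contains $A$. The unbounded components of the complements are excluded because the two segments $A$ and $x-B$ meet. Thus the hypotheses of the gap lemma hold: thickness product $\ge 1$, intersecting hulls, and neither set contained in a gap of the other.

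\emph{Gap lemma.} Suppose $K_1=\mathcal{E}_1$ and $K_2=x-\mathcal{E}_2$ are disjoint. Call a bounded gap $U$ of $K_1$ and a bounded gap $V$ of $K_2$ \emph{linked} if each contains exactly one endpoint of the other.
\begin{enumerate}[label=(\roman*)]
\item First I produce an initial linked pair. An endpoint of $A$ lies in $K_1$, so it lies in some gap of $K_2$, bounded by the previous step. Symmetrically, an endpoint of $x-B$ lies in a gap of $K_1$. Among gaps reached this way, choosing appropriate ones gives a linked pair.
\item Given a linked pair $(U,V)$, say $|U|\le|V|$, let $u$ be the endpoint of $U$ inside $V$. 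On the side of $u$ away from $U$, consider the bridge $C$ of $K_1$ at $U$, namely the component $I^1$ or $I^2$ of $I_k\setminus U$ in the construction. Thickness gives $|C|\ge|U|$.
\item By $\tau_1\tau_2\ge1$ and the thickness of $K_2$, the set $C$ cannot be contained in $V$. Hence the far endpoint of $C$ is not in $V$. Since $C\cap K_1\neq\emptyset$ and $K_1\cap K_2=\emptyset$, the point $C\cap K_1$ nearest the far end lies in a gap of $K_2$, or $C$ contains a gap $U'$ of $K_1$ containing the endpoint of $V$.
\item In either case this yields a new linked pair $(U',V)$ or $(U,V')$ with $U'\neq U$ or $V'\neq V$, where $U'$ is a gap of $K_1$ inside the bridge.
\end{enumerate}

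Iterating produces infinitely many distinct linked pairs with one gap fixed or nested inside bridges. Their endpoints form convergent sequences, and the limit point lies in both $K_1$ and $K_2$, since each is closed and the bridges shrink onto points of $K_1$. This contradicts disjointness.

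The main obstacle is steps (ii)--(iv). I must choose the bridge correctly relative to the fixed gap order used in the thickness definition, and I must show the iteration genuinely progresses. I would first arrange the gaps in nonincreasing size order, which makes bridges exactly the neighbouring intervals $I_k^{1},I_k^{2}$ in the definition. I would then argue via a compactness/nested-intervals argument rather than tracking pairs explicitly.
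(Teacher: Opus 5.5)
The paper gives no proof of this statement; it is quoted from Hall. Your reduction to a Newhouse-type gap lemma therefore has to stand on its own. The reduction step is sound: $x\in\mathcal{E}_1+\mathcal{E}_2$ iff $\mathcal{E}_1\cap(x-\mathcal{E}_2)\neq\emptyset$. So is the step showing that neither set lies in a gap of the other, since $|U|\le |I_k|/3\le|A|/3\le|B|$.

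\textbf{The failing step.} The genuine problem is in (ii)--(iii). You advance from the \emph{smaller} gap $U$ of the linked pair and claim that its bridge $C$ on the $V$-side cannot lie inside $V$. But $|C|\ge|U|$ says nothing when $|V|\gg|U|$. Take $U=(0,1)$ with right bridge $[1,5/2]$, and $V=(1/2,10)$ with bridges of length $\ge 10$. This linked configuration is compatible with $\tau_1,\tau_2\ge 1$, yet $C=[1,5/2]\subset V$. Neither $\tau_1\tau_2\ge1$ nor the thickness of $K_2$ excludes it, so the step that is supposed to produce a new linked pair fails.

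\textbf{The fix.} Advance from the \emph{larger} gap. Suppose $|U|\ge|V|$, and write $U=(u_1,u)$, $V=(v_1,v)$ with $u_1<v_1<u<v$. Let $C=[u,c']$ be the bridge at $u$ ($I_k^1$ or $I_k^2$ in the given order). Then $c'-u\ge|U|\ge|V|>v-u$, so $v\in(u,c')$. Since $v\in K_2$, we have $v\notin K_1$, so $v$ lies in a gap $U'\subset C$ of $K_1$, and $(U',V)$ is linked. Under only $\tau_1\tau_2\ge1$ you would instead compare both bridges: $|C_U|\,|C_V|\ge|U|\,|V|$ forces $|C_U|\ge|V|$ or $|C_V|\ge|U|$. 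Your other alternative in (iii), about the point of $C\cap K_1$ nearest the far end, plays no role.

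\textbf{The initial linked pair.} In (i), an endpoint of $A$ need not lie in a \emph{bounded} gap of $K_2$, since it may lie outside $x-B$. Instead, assume without loss of generality that $\min A<b:=\min(x-B)$. Because $A$ meets $x-B$ and $K_1\cap K_2=\emptyset$, we get $b\in(\min A,\max A)\setminus K_1$, so $b$ lies in a gap $U=(u_1,u_2)$ of $K_1$. Since $U$ cannot contain $x-B$, the point $u_2$ lies strictly inside $x-B$. Hence $u_2$ lies in a gap $V$ of $K_2$, and $(U,V)$ is linked.

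\textbf{Termination.} You need not reorder the gaps; doing so would itself require proving that thickness $\ge1$ survives the reordering. In the given order, $U'\subset I_k$ simply has a larger index than $U$.
\begin{enumerate}
\item Suppose one gap of the pair were eventually fixed. Then infinitely many distinct, pairwise disjoint gaps of the other set would each contain one of its two endpoints, which is impossible.
\item Hence both indices tend to infinity.
\item So both gap lengths tend to $0$, because only finitely many gaps have length $\ge\varepsilon$.
\item Since $U_n\cap V_n\neq\emptyset$, a convergent subsequence of endpoints yields a common limit point, which lies in $K_1\cap K_2$. This contradicts disjointness.
\end{enumerate}
With these repairs your argument proves the theorem.
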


\begin{corollary}
\label{cor:logset}
For any Cantor-type set $\mathcal{E}$ defined on a segment A with 
$\tau(\log\mathcal{E}) > 1$, the product $\mathcal{E} \cdot \mathcal{E}$ is just the segment  $A \cdot A$.  
\end{corollary}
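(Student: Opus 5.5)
The plan is to pass to logarithms, apply Hall's Theorem (Theorem 2.2 of \cite{hall1947}) to the single set $\log\mathcal{E}$ added to itself, and then exponentiate back. We implicitly assume $A=[a,b]$ with $a>0$, since otherwise $\log\mathcal{E}$ is not defined; this is the situation in the application, where $\mathcal{E}$ consists of continued fractions bounded away from $0$.

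\textbf{Step 1: $\log\mathcal{E}$ is a Cantor-type set on $\log A$.} The map $\log:(0,\infty)\to\R$ is an increasing homeomorphism. It therefore sends $A=[a,b]$ onto the closed segment $\log A=[\log a,\log b]$. It sends the mutually disjoint open gaps $U_\alpha\subset A$ onto mutually disjoint open intervals $\log U_\alpha\subset\log A$. It also preserves complements, so
\[
\log\mathcal{E}=\log A\setminus\bigcup_{\alpha}\log U_\alpha
\]
is a Cantor-type set on $\log A$ in the sense of (\ref{1q}). We keep the same ordering of the gaps. Because the map is monotone, the maximal closed interval $I_k$ of $\mathcal{E}_k$ containing $U_k$, together with its two side pieces $I_k^1,I_k^2$, is carried to the corresponding objects for $\log\mathcal{E}$. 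Hence the hypothesis $\tau(\log\mathcal{E})>1$ is meaningful and, in particular, gives $\tau(\log\mathcal{E})\ge 1$.

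\textbf{Step 2: apply Hall's Theorem.} Take $\mathcal{E}_1=\mathcal{E}_2=\log\mathcal{E}$, both defined on the segment $\log A$. The length ratio of the two segments is $|\log A|/|\log A|=1\in[\tfrac13,3]$. Both thickness hypotheses of Hall's theorem hold by Step 1. Hall's theorem therefore yields
\[
\log\mathcal{E}+\log\mathcal{E}=\log A+\log A=[2\log a,\,2\log b].
\]

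\textbf{Step 3: exponentiate.} For positive reals, $\log x+\log y=\log(xy)$, so
\[
\log\mathcal{E}+\log\mathcal{E}=\log(\mathcal{E}\cdot\mathcal{E}).
\]
Applying the bijection $\exp$ to the identity from Step 2 gives
\[
\mathcal{E}\cdot\mathcal{E}=[a^2,b^2]=A\cdot A,
\]
where the last equality holds because $A\subset(0,\infty)$.

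\textbf{Main obstacle.} The only delicate point is the bookkeeping in Step 1. One must check that the thickness of $\log\mathcal{E}$ is computed with respect to the images of the original gaps, in the same order, and that the positivity assumption on $A$ is made explicit. Everything else is formal.
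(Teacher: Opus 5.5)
Your proposal is correct and is essentially the argument the paper intends; the paper states this corollary without proof, as an immediate consequence of Hall's theorem applied to $\log\mathcal{E}+\log\mathcal{E}$ on the segment $\log A$, followed by exponentiation via $\log x+\log y=\log(xy)$. Your explicit note that $A\subset(0,\infty)$ is needed is a worthwhile addition, and it holds in the application since the elements of $\F$ lie in $[4,5]$.
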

The core of our proof relies on a Cantor-type set defined as
\[
\F = \left\{ x = [x_0; x_1, x_2, \dots] \;\middle|\,
\begin{aligned}
&(x_0,x_1)=(4,3),\\
&x_i \in \{1,2,3,4\}, \\
&(x_i, x_{i+1}) \neq (4,4), \\
&(x_i, x_{i+1}, x_{i+2},x_{i+3},x_{i+4}) \neq (4,1,4,1,4)
\end{aligned}
\right\}.
\]
\\
We use $\T[a_0,a_1,\dots,a_n]$ to denote the (potentially empty) minimal closed interval containing all continued fractions in $\F$ whose expansions begin with the prefix $a_0,a_1,\dots,a_n$, and we denote by $\T[a_0,a_1,\dots,a_n,_a^b]( a,b\in\mathbb{Z}_{>0})$ the interval $\left[\min\T[a_0,\dots,a_n,a],\max \T[a_0,\dots,a_n,b]\right]$. \\
Let 
\[
C_n=\bigcup_{a_0,\dots,a_n\in\{1,2,3,4\}} \T[a_0,\dots,a_n],\ \forall n\in\Z_{\geq0}.
\]
We will use the following properties of the sets $\T[a_0,a_1,\dots,a_n]$.
\begin{itemize}
    \item \text{Disjointness:}
    \[
    \T[\alpha_0,\alpha_1,\dots,\alpha_n]\cap \T[\beta_0,\beta_1,\dots,\beta_n]=\varnothing \,\;\text{if}\;\, (\alpha_0,\alpha_1,\dots,\alpha_n)\neq(\beta_0,\beta_1,\dots,\beta_n).
    \]
    \item \text{Nestedness:} $C_{n+1} \subset C_n$ for all $n$.  
\end{itemize}
As a consequence, we see that $\lim_{n\to\infty} C_n$ exists. Let this limit be denoted by $\mathcal{C}$.

\begin{proposition}
    $
    \F=\lim_{n\to\infty} C_n=\C.
    $
    \end{proposition}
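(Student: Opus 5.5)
We prove the two inclusions $\F\subset\C$ and $\C\subset\F$ separately. Since the $C_n$ are nested, $\C=\bigcap_{n\ge0}C_n$. We also note that $\T[a_0,\dots,a_n]$ is nonempty exactly when $(a_0,\dots,a_n)$ is an admissible prefix. Admissible means: it starts with $(4,3)$, all digits lie in $\{1,2,3,4\}$, it contains no factor $(4,4)$, and it contains no factor $(4,1,4,1,4)$. Any admissible finite word can be extended to an infinite admissible sequence, for instance by appending $3,3,3,\dots$. Appending $3$'s creates no forbidden factor, since each forbidden pattern involves only the digits $4$ and $1$ and is already absent from the prefix.

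\emph{Inclusion $\F\subset\C$.} Take $x=[x_0;x_1,\dots]\in\F$. For every $n$, the prefix $(x_0,\dots,x_n)$ is admissible and $x$ is one of the continued fractions with this prefix. Hence $x\in\T[x_0,\dots,x_n]\subset C_n$ for all $n$, and so $x\in\C$.

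\emph{Inclusion $\C\subset\F$.} Take $y\in\C$.
\begin{enumerate}
\item By disjointness, for each $n$ there is a unique word $w_n=(a_0^{(n)},\dots,a_n^{(n)})$ with $y\in\T[w_n]$.
\item Nestedness, read at the level of individual intervals, gives $\T[a_0,\dots,a_{n+1}]\subset\T[a_0,\dots,a_n]$, because the minimal closed interval of a subset lies inside the minimal closed interval of the superset. Combined with disjointness, this shows that $w_n$ is the length-$(n+1)$ prefix of $w_{n+1}$. So the words $w_n$ define one infinite sequence $(a_0,a_1,\dots)$, and each finite prefix of it is admissible.
\item Each forbidden pattern has length at most $5$. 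Any occurrence would therefore already appear in some finite prefix, so the infinite sequence itself is admissible. Hence $z=[a_0;a_1,a_2,\dots]\in\F$.
\item Both $y$ and $z$ lie in $\T[a_0,\dots,a_n]$ for every $n$. So it suffices to show that $|\T[a_0,\dots,a_n]|\to0$; then $y=z\in\F$.
\end{enumerate}

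\emph{Main step: the diameters shrink.} Every point of $\T[a_0,\dots,a_n]$ lies between two continued fractions sharing the prefix $a_0,\dots,a_n$, and hence within the cylinder of all reals whose expansion begins with this prefix. That cylinder has endpoints $p_n/q_n$ and $(p_n+p_{n-1})/(q_n+q_{n-1})$, so its length is
\[
\frac{1}{q_n(q_n+q_{n-1})}\le\frac{1}{q_n^2}.
\]
Since all partial quotients are at least $1$, the denominators satisfy $q_n\ge F_{n}$, the Fibonacci numbers, which grow without bound. Therefore the diameter tends to $0$.

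I expect the main care to be needed in step 2, specifically in making disjointness and nestedness compatible for the individual intervals $\T[\cdot]$ rather than only for the unions $C_n$. The argument should rely only on the inclusion of minimal hulls noted there together with the disjointness property stated above.
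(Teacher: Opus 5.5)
Your proof is correct, but for the inclusion $\C\subset\F$ it takes a different route from the paper.

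The paper argues by contradiction. It takes $x=[a_0;a_1,\dots]\in\C\setminus\F$ and lets $K(x)$ be the first index at which the expansion of $x$ violates a constraint. It then asserts that $x\in\T[a_0,\dots,a_{K(x)+4}]$, which is empty.

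You never use the expansion of $y$. Disjointness and monotonicity of hulls give you a coherent sequence of admissible words $w_n$, hence a point $z\in\F$. The bound $|\T[a_0,\dots,a_n]|\le 1/q_n^2\to 0$ then forces $y=z$.

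The paper's version is shorter, but its step ``by definition $x\in\T[a_0,\dots,a_{K(x)+4}]$'' is not actually by definition. Uniqueness only gives $x\in\T[\nu_0,\dots,\nu_{K(x)+4}]$ for some admissible word $\nu$. Identifying $\nu$ with the digits of $x$ requires knowing that each $\T[\nu_0,\dots,\nu_n]$ lies in the closure of the cylinder of $\nu$. It also needs $x$ to be irrational, which the paper assumes tacitly when it writes an infinite expansion and calls $K(x)$ well defined.

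Your argument states this cylinder containment explicitly and uses it to control diameters. It also rules out rational points of $\C$ automatically, since $y=z$ is irrational. The cost is the small extra computation of the cylinder length and the Fibonacci lower bound on $q_n$.
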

\begin{proof}
\leavevmode
\\
    For every $n\in\Z_{\geq0}$ and every $x=[a_0; a_1, a_2, \dots]\in\F,\text{ one has }x\in \T[a_0,\dots,a_n]\subseteq C_n$. Hence $\F \subseteq \C$, and it remains to prove the reverse inclusion. Assume by contradiction that there exists $x=[a_0; a_1, a_2, \dots] \in \C\setminus\F$.
    \\
    For every $x\notin\F$, define
    the integer value 
    \[K(x) := \min \left\{ i \in \mathbb{N}\cup\{0\} \,\middle|\,
    \begin{aligned}
    &a_i \notin \{1,2,3,4\}, \quad \text{or} \\
    &(a_i, a_{i+1}) = (4,4), \quad \text{or} \\
    &(a_i, a_{i+1}, a_{i+2}, a_{i+3}, a_{i+4}) = (4,1,4,1,4)
    \end{aligned}\right\},
    \]
      which is well-defined and finite for every $x\notin\F$. However, $x\in \C$, so for arbitrary $i$ there is a unique $\T[\nu_0,\nu_1,\dots,\nu_i]\subseteq C_i$ which contains $x$.
    In particular, there exists a unique interval $\T[\nu_0,\nu_1,\dots,\nu_{K(x)+4}]$ such that $x\in\T[\nu_0,\nu_1,\dots,\nu_{K(x)+4}]$.
    Since $x=[a_0; a_1, a_2, \dots,a_{K(x)+4},\dots]$, by definition $$x\in\T[a_0,\dots,a_{K(x)},a_{K(x)+1},a_{K(x)+2},a_{K(x)+3},a_{K(x)+4}].$$However, $$\T[a_0,\dots,a_{K(x)},a_{K(x)+1},a_{K(x)+2},a_{K(x)+3},a_{K(x)+4}]=\varnothing$$ as no element in $\F$ begins with $(a_0,\dots,a_{K(x)},a_{K(x)+1},a_{K(x)+2},a_{K(x)+3},a_{K(x)+4})$. This contradiction completes the proof.
\end{proof}
\begin{theorem}
\label{thm:F4}
$\text{}$\\
(1) $\F$ is a Cantor-type set.\\
(2) $\tau(\F)\geq \frac{83497\sqrt{26565}-228339}{13158329} \approx 1.0169>1$.
\end{theorem}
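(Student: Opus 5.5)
For part (1), the plan is to use the Proposition $\F=\C$. The first interval is $I=\T[4,3]$, and each $C_{n+1}$ is obtained from $C_n$ by deleting, inside every nonempty $\T[a_0,\dots,a_n]$, the open gaps between consecutive nonempty children $\T[a_0,\dots,a_n,j]$. The removed sets are therefore countably many disjoint open intervals, and $\F=I\setminus\bigcup U_\alpha$. Since all children are disjoint closed intervals, the intervals removed at different levels do not overlap. We order the gaps by level, and within a level arbitrarily. Then for a gap $U_k$ between children $j$ and $j'$ of $\T[a_0,\dots,a_n]$, the interval $I_k$ is a union of consecutive children (together with the gaps between them), and the bridges $I_k^1,I_k^2$ are themselves unions of consecutive children $\T[a_0,\dots,a_n,{}_a^b]$. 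The thickness condition therefore reduces to the following: for every admissible prefix $P$ and every gap between consecutive children of $\T[P]$, the gap length times $\tau$ is at most the length of $\T[P,{}_a^{j}]$ on one side and of $\T[P,{}_{j'}^{b}]$ on the other. Here $a,b$ are the extreme admissible digits. Because of the order chosen within a level, it suffices to compare each gap against the single adjacent child on each side, i.e. to require $\tau|U|\le \min(|\T[P,j]|,|\T[P,j']|)$.

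For part (2), I would express all lengths through the standard formula. If $P$ has convergents $p_n/q_n$, then $[P,\theta]$ is a Möbius function of the tail $\theta$, and the distance between $[P,\theta_1]$ and $[P,\theta_2]$ equals $|\theta_1-\theta_2|$ divided by $(q_n\theta_1+q_{n-1})(q_n\theta_2+q_{n-1})$. Thus every ratio (gap)/(neighbouring child) is a rational function of $\theta_1,\theta_2,\theta_3$ and of $s=q_{n-1}/q_n=[0;a_n,\dots,a_1]$. The tails $\theta_i$ range over the extreme points of the tail sets: the minima and maxima of continued fractions obeying the rules, conditioned on the preceding digits. These extremes are explicit quadratic irrationals. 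With digits in $\{1,\dots,4\}$, avoiding $44$ and $41414$, the extremes come from periodic words such as $[4;\overline{1,3}]$, $[1;\overline{4,1,3,\dots}]$ and the like. Which words occur depends on the last one to four digits of $P$ (whether it ends in $4$, $41$, $414$, $4141$), because these constrain the next child. So I would split into cases according to the admissible suffix of $P$ and the pair of adjacent children $(j,j+1)$, $j=1,2,3$, where a child may be absent (e.g. no child $4$ after a $4$).

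In each case the ratio is monotone in $s$, so its worst value is attained at an extreme $s$ compatible with the suffix. Those extreme values are again explicit quadratic irrationals, and the binding one presumably involves $\sqrt{26565}$. The minimum over all cases should give the stated constant $\frac{83497\sqrt{26565}-228339}{13158329}$, and I expect the binding case to be the gap adjacent to the restricted child $4$ after a prefix ending in $4141$ or $414$.

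The main obstacle is the bookkeeping. One has to identify, for each suffix state, the exact extreme tails; this is a finite automaton on states (last digits relative to the forbidden patterns) whose extremal paths must be found. One also has to verify monotonicity in $s$ and check all the finitely many inequalities. I would first compute the extremal periodic words by the standard alternating argument (to maximize, make even-index digits large and odd-index digits small, subject to the automaton), then verify each inequality numerically, and finally certify the minimal one exactly.
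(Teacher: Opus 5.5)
Your proposal is correct but takes a different route from the paper.

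\textbf{How the two routes differ.} The paper does not remove gaps level by level. It builds $\A$ by a binary splitting driven by nine segment types, one gap per step; for instance, a Type~1 segment $\T[P]$ is first cut into $\T[P,{}_1^3]$ and $\T[P,4]$. It then proves $\A=\C=\F$ and compares each gap with its two bridges, which are often unions of several children. Each ratio is written at the ``type root'' prefix, so Lemma~\ref{lemma:comptrick} with the uniform crude bound $\varepsilon_n\in[\frac15,1]$ is enough.

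You get (1) directly from $\F=\C$. By ordering gaps by level, you reduce (2) to a stronger but sufficient test: each gap is at most $\lambda$ times each \emph{single} adjacent child, checked over the five suffix states $\varnothing,4,41,414,4141$. This saves the nine-type bookkeeping and the lemma $\A=\C$. (Your intermediate remark that the bridges are $\T[P,{}_a^{j}]$ and $\T[P,{}_{j'}^{b}]$ holds only for the first gap removed in each parent, but your final criterion does not use it.)

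\textbf{The single-child test has almost no slack.} The suffix-dependent range of $s$ is genuinely needed. The binding case is a prefix ending in $4,1$, comparing the gap between children $3$ and $4$ with child $4$. Let $m=[\overline{1;4,1,4,1,3}]$, $\theta_1=3+1/m\approx3.8284$, $\theta_2=4+1/\theta_1\approx4.2612$ and $\theta_3=[\overline{4;1,3,1,4,1}]\approx4.7929$. The ratio is $\frac{(\theta_2-\theta_1)(\theta_3+s)}{(\theta_3-\theta_2)(\theta_1+s)}$, which behaves as follows:
\begin{itemize}
\item at $s=\frac15$ it is about $1.009$, so the test fails;
\item at $s=\frac45$ it is about $0.98367$, just worse than the stated $\lambda\approx0.983385$;
\item at $s=\frac{21}{26}$ it equals the stated constant exactly. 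This bound comes from $s=\frac{4+s_P}{5+s_P}$ with $s_P\ge\frac15$.
\end{itemize}
The true extreme gives a slightly better constant, since $s_P>\frac14$ when the digit before $4$ is at most $3$. All other single-child ratios stay below $0.96$ even with the crude range $s\in[\frac15,1]$.

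\textbf{Two guesses to correct.}
\begin{itemize}
\item After a prefix ending in $414$ or $4141$ there is no child $4$ at all; the critical state is $41$.
\item The extremal tails are $[\overline{4;1,4,1,3,1}]=\frac{117+\sqrt{26565}}{58}$, $[\overline{1;4,1,4,1,3}]$, and constrained variants such as $[\overline{4;1,3,1,4,1}]$ and $[3;\overline{1,4,1,4,1,3}]$. Words like $[4;\overline{1,3}]$ are not among them, though your alternating greedy search on the automaton would find the right ones.
\end{itemize}

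\textbf{What the paper's structure buys beyond this theorem.} Each split has exactly one gap between two bridges of lengths $r$ and $t$. That is the setup Lemma~\ref{lem:log-interval} uses in the proof of Theorem~\ref{thm:logF4}. With your multi-gap-per-level order, that step would have to be adapted.
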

\begin{proof}[Proof of \autoref{thm:F4}]
\leavevmode
\\
The proof proceeds as follows. We begin by defining several types of segments that serve as the structural basis for our construction. Using these types, we implement an iterative binary partitioning algorithm to generate a Cantor set, $\A$. We then demonstrate that $\A$ is equivalent to both $\C$ and $\F$ (Lemma \ref{lem:inclusion}). Finally, we utilize the previously defined segment categories to compute the thickness of $\A$, proving it satisfies the theorem's requirements (Lemma \ref{lemma:comptrick}, Corollary \ref{cor:F44}).

Define 
\[
\left[a_0;a_1,\dots,a_n,_{\alpha}^{\beta}\right]
=
\bigl[\, [a_0;a_1,\dots,a_n,\alpha]\ ,[a_0;a_1,\dots,a_n,\beta]\,\bigr],
\]
where $\alpha$ and $\beta$ are blocks of continued fractions.\\
\begin{remark}
Note that the relative order of the numbers $[a_0,a_1,\dots,a_n,\alpha]$ and $[a_0,a_1,\dots,a_n,\beta]$ depends on the parity of $n$. Here and below, we assume that $n$ is odd. If $n$ is even, one only needs to exchange the order of the intervals.
\end{remark}
$\text{}$\\
We define the following types of segments. They are all of the form $$\left[a_0;a_1,\dots,a_n,_{\alpha}^{\beta}\right],$$
where $\alpha$ and $\beta$ are strings given in the following table.\\

Throughout the following, we assume that $(a_0,\dots,a_n)$ is chosen from $\{4\}\times\{3\}\times \{1,2,3,4\}^{n-1}$. In the following table, we require the string inside $\T[\dots]$ to have no substring of the form $(4,4)$ or $(4,1,4,1,4)$.

\begin{center}
\renewcommand{\arraystretch}{1.5} 
\begin{tabular}{|C{2.5cm}|C{3.5cm}|C{2.5cm}|C{3.5cm}|C{3.9cm}|}
\thickhline
\multicolumn{5}{|c|}{\small\bfseries Segment Type Definition} \\ 
\thickhline
\makecell{$\text{Segment Type}$} &
\makecell{$\alpha$} &
\makecell{$\beta$} & 
\makecell{$\text{Representation }$\\$\text{using }\T[\dots]$}&
\makecell{$\text{Further}$\\$\text{Restrictions}$\\$ \text{ on }a_1,\dots, a_n$}\cr
\medhline

Type 1 & 
$\overline{1,4,1,4,1,3}$ & 
$\overline{4,1,4,1,3,1}$ & $\T[a_0,\dots,a_n]$ & $a_n \neq 4$\linebreak $(a_{n-1},a_n) \neq (4,1)$\linebreak $(a_{n-2},a_{n-1},a_n) \neq (4,1,4)$\linebreak $(a_{n-3},a_{n-2},a_{n-1},a_n)\neq (4,1,4,1)$\\ \hline

Type 2 & 
$\overline{1,4,1,4,1,3}$ & 
$\overline{3,1,4,1,4,1}$ & $\T[a_0,\dots,a_n,_{1}^{3}]$ & $a_n \neq 4$\linebreak $(a_{n-2},a_{n-1},a_n) \neq (4,1,4)$\linebreak $(a_{n-3},a_{n-2},a_{n-1},a_n)\neq (4,1,4,1)$ \\ \hline

Type 3 & 
$\overline{1,4,1,4,1,3}$ & 
$2,\overline{1,4,1,4,1,3}$ & $\T[a_0,\dots,a_n,_{1}^{2}]$ & $a_n \neq 4$\linebreak $(a_{n-2},a_{n-1},a_n) \neq (4,1,4)$\\ \hline

Type 4 & 
$4,\overline{3,1,4,1,4,1}$ & 
$\overline{4,1,4,1,3,1}$ & $\T[a_0,\dots,a_n,4]$ & $(a_{n-1},a_n) \neq (4,1)$ \\ \hline

Type 5 & 
$4,2,\overline{1,4,1,4,1,3}$ & 
$\overline{4,1,4,1,3,1}$ & $\T[a_0,\dots,a_n,4,_{1}^{2}]$ &$(a_{n-1},a_n) \neq (4,1)$ \\ \hline

Type 6 & 
$4,1,\overline{1,4,1,4,1,3}$ & 
$\overline{4,1,4,1,3,1}$ & $\T[a_0,\dots,a_n,4,1]$ & $(a_{n-1},a_n) \neq (4,1)$ \\ \hline

Type 7 & 
$4,1,4,\overline{3,1,4,1,4,1}$ & 
$\overline{4,1,4,1,3,1}$ & $\T[a_0,\dots,a_n,4,1,4,_{1}^{3}]$ & - \\ \hline

Type 8 & 
$4,1,4,2,\overline{1,4,1,4,1,3}$ & 
$\overline{4,1,4,1,3,1}$ & $\T[a_0,\dots,a_n,4,1,4,_{1}^{2}]$ & - \\ \hline

Type 9 & 
$4,1,4,1,\overline{1,4,1,4,1,3}$ & 
$\overline{4,1,4,1,3,1}$ & $\T[a_0,\dots,a_n,4,1,4,1]$ & - \\ \hline

\thickhline
\end{tabular}
\end{center}

To construct the Cantor-type set $\A$ mentioned in the outline of the proof, consider two families of segments $\{A_j^i\mid i\in \mathbb{Z}_{\ge 0},j\in\mathbb{Z}_{\geq1},1\le j\le 2^{i}\}$ and $\{G_j^i\mid i\in \mathbb{Z}_{\ge 1},j\in\mathbb{Z}_{\geq1},1\le j\le 2^{i-1}\}$ defined as follows. Let $A_1^0= \T[4,3]=\left[4;3,^{\overline{4,1,4,1,3,1}}_{\overline{1,4,1,4,1,3}}\right]$. For $i \geq 1$, we define $A_j^i$ for $j \in \{1, 2, \dots, 2^i\}$ inductively from $A_j^{i-1}\ \left(j \in \{1, 2, \dots, 2^{i-1}\}\right)$ according to the rules below and let $G_j^i=A_j^{i-1}\setminus(A^i_{2j-1}\cup A^i_{2j})$, which represent the gap between the segments.
\begin{center}
    \begin{tikzpicture}[thick, x=0.11\textwidth, y=1cm,font=\footnotesize]
\tikzset{
    seg/.style = {line width=1.2pt, draw=black},
    dot/.style = {draw=black, fill=black, circle, inner sep=1.6pt},
    endlbl/.style = {below=3pt, font=\footnotesize, align=center},
    toplbl/.style = {above=4pt, font=\small\bfseries},
    arrowstyle/.style={-{Stealth[length=6pt,width=5pt]}, semithick},
    rulelab/.style = {font=\footnotesize, text=black, align=right}
  }
  
  \foreach \i in {0,...,7} \coordinate (x\i) at (\i,0);

  \coordinate (y0) at (0,0);      
  \coordinate (y1) at (0,-1.8);   
  
  \draw[seg] (x0 |- y0) -- (x6 |- y0);
  
  \node[dot] at (x0 |- y0) {};
  \node[dot] at (x6 |- y0) {};
  \node[toplbl] at ($0.5*(x0|-y0)+0.5*(x6|-y0)$) {$A_j^{i-1}$};

  \draw[seg] (x0 |- y1) -- (x2 |- y1);
  \draw[seg] (x4 |- y1) -- (x6 |- y1);
  
  \node[dot] at (x0 |- y1) {};
  \node[dot] at (x2 |- y1) {};
  \node[dot] at (x4 |- y1) {};
  \node[dot] at (x6 |- y1) {};
  \node[toplbl] at ($0.5*(x0|-y1)+0.5*(x6|-y1)$) {$G_{j}^{i}$};
  \node[toplbl] at ($0.5*(x0|-y1)+0.5*(x2|-y1)$) {$A_{2j-1}^i$};
  \node[toplbl] at ($0.5*(x4|-y1)+0.5*(x6|-y1)$) {$A_{2j}^i$};
  \draw[draw=black] (-0.5,1) rectangle ++(7,-3.5);
\end{tikzpicture}
\end{center}
We put \[\A_i:=\bigcup_{j=1}^{2^i} A_j^i,\qquad \A:=\bigcap_{i=0}^{\infty} \A_i.\]
Since for all $i\in\Z_{\geq0}$, $\A_{i+1}\subset A_i$, $\lim_{{i\to\infty}} A_i$ exists and is equal to $\A$. Note that intuitively, we can think of $\A$ as being constructed by starting with $A_1^0=T[4,3]$ and throwing out the segments $G_j^i$ at every step.\\
Note that the interval $A_1^0$ is an interval of Type 1. We now formalize the construction of the Cantor-type set $\A$. Starting from $A_1^0$, apply the corresponding rule from the 9 defined below to subdivide each segment type. (Note: Rule $i$ governs the subdivision of Type $i$ segments).\\
\begin{remark}
Again, the order of the intervals $A_{2j-1}^{i}$ and $A_{2j}^{i}$ depends on the parity of $n$. We assume $n$ is odd in all of the below. If $n$ is even, one only needs to exchange the order of $A_{2j-1}^{i}$ and $A_{2j}^{i}$.
\end{remark}
$\text{}$\\
We now define the \textbf{9 Rules} for subdividing the various interval types.
\begin{enumerate}

\item
The interval 
$
A_j^{i-1}=[a_0;a_1,\dots,a_n,_{\overline{1,4,1,4,1,3}}^{\overline{4,1,4,1,3,1}}]
$
is subdivided into
\[ 
A_{2j-1}^{i}=[a_0;a_1,\dots,a_n,_{\overline{1,4,1,4,1,3}}^{\overline{3,1,4,1,4,1}}]\text{ (Type 2)} \text{ and }
A_{2j}^{i}= [a_0;a_1,\dots,a_n,_{4,\overline{3,1,4,1,4,1}}^{\overline{4,1,4,1,3,1}}] \text{ (Type 4)}, 
\]
and this corresponds to the following diagram\footnote
{In the diagram, the interval to the left of the arrows is the parent interval (i.e. the interval to be divided, which is $A_j^{i-1}$), and the intervals to the right are the children intervals (i.e. the intervals which the parent is divided into, which are $A_{2j-1}^{i}$ and $A_{2j}^{i}$). The different formatting (continued fraction versus $T[*]$) can be derived simply from the definition of $T[*]$. For example in this case, $T[a_0,a_1,\dots a_n]=[a_0;a_1,\dots,a_n,_{\overline{1,4,1,4,1,3}}^{\overline{4,1,4,1,3,1}}]$.}.

\begin{center}
    \begin{tikzpicture}
  \def\hsep{30mm}

  \tikzset{
    box/.style = {draw, thick, rounded corners=3pt, inner sep=4pt, font=\small, align=center},
    arrow/.style = {-{Stealth[length=6pt,width=6pt]}, thick},
  }

  \node (P) at (0,0) {$\T[a_0,\dots,a_n]$};
  \node (C1) at ($(P.east)+(\hsep,0.8)$) {$\T[a_0,\dots,a_n,_1^3]$};
  \node (C2) at ($(P.east)+(\hsep,-0.8)$) {$\T[a_0,\dots,a_n,4]$};

  \draw[arrow] (P.east) to[out=0,in=180] (C1.west);
  \draw[arrow] (P.east) to[out=0,in=180] (C2.west);

  \node[draw, thick, fit=(P)(C1)(C2), inner sep=8pt]{};
\end{tikzpicture}
\end{center}
\item
The interval 
$
A_{j}^{i-1}=[a_0;a_1,\dots,a_n,_{\overline{1,4,1,4,1,3}}^{\overline{3,1,4,1,4,1}}]
$
is subdivided into
\[
A_{2j-1}^{i}=[a_0;a_1,\dots,a_n,_{\overline{1,4,1,4,1,3}}^{2,\overline{1,4,1,4,1,3}}]\text{ (Type 3)} \text{ and } 
A_{2j}^{i}=[a_0;a_1,\dots,a_n,_{3,\overline{4,1,4,1,3,1}}^{\overline{3,1,4,1,4,1}}]\footnote{This is of type 1 because $\overline{3,1,4,1,4,1}$ is the same as $3,\overline{1,4,1,4,1,3}$. Similar simplifications of continued fraction strings will occur below.}\text{ (Type 1)},
\]
which is represented by the following diagram:
\begin{center}
    \begin{tikzpicture}
  \def\hsep{30mm}

  \tikzset{
    box/.style = {draw, thick, rounded corners=3pt, inner sep=4pt, font=\small, align=center},
    arrow/.style = {-{Stealth[length=6pt,width=6pt]}, thick},
  }

  \node (P) at (0,0) {$\T[a_0,\dots,a_n,_1^3]$};
  \node (C1) at ($(P.east)+(\hsep,0.8)$) {$\T[a_0,\dots,a_n,3]$};
  \node (C2) at ($(P.east)+(\hsep,-0.8)$) {$\T[a_0,\dots,a_n,_1^2]$};

  \draw[arrow] (P.east) to[out=0,in=180] (C1.west);
  \draw[arrow] (P.east) to[out=0,in=180] (C2.west);

  \node[draw, thick, fit=(P)(C1)(C2), inner sep=8pt]{};
\end{tikzpicture}

\end{center}

\item
The interval 
$
A_{j}^{i-1}=[a_0;a_1,\dots,a_n,_{\overline{1,4,1,4,1,3}}^{2,\overline{1,4,1,4,1,3}}]
$
is subdivided into
\[
A_{2j-1}^{i}=[a_0;a_1,\dots,a_n,_{\overline{1,4,1,4,1,3}}^{1,\overline{1,4,1,4,1,3}}] \text{ (Type 1)} \text{ and } 
A_{2j}^{i}=[a_0;a_1,\dots,a_n,_{2,\overline{4,1,4,1,3,1}}^{2,\overline{1,4,1,4,1,3}}]\text{ (Type 1)},
\]
which is represented by the following diagram:
\begin{center}
    \begin{tikzpicture}
  \def\hsep{30mm}

  \tikzset{
    box/.style = {draw, thick, rounded corners=3pt, inner sep=4pt, font=\small, align=center},
    arrow/.style = {-{Stealth[length=6pt,width=6pt]}, thick},
  }

  \node (P) at (0,0) {$\T[a_0,\dots,a_n,_1^2]$};
  \node (C1) at ($(P.east)+(\hsep,0.8)$) {$\T[a_0,\dots,a_n,2]$};
  \node (C2) at ($(P.east)+(\hsep,-0.8)$) {$\T[a_0,\dots,a_n,1]$};

  \draw[arrow] (P.east) to[out=0,in=180] (C1.west);
  \draw[arrow] (P.east) to[out=0,in=180] (C2.west);

  \node[draw, thick, fit=(P)(C1)(C2), inner sep=8pt]{};
\end{tikzpicture}

\end{center}

\item
The interval 
$
A_{j}^{i-1}=[a_0;a_1,\dots,a_n,^{\overline{4,1,4,1,3,1}}_{4,\overline{3,1,4,1,4,1}}]
$
is subdivided into
\[
A_{2j-1}^{i}=[a_0;a_1,\dots,a_n,_{4,3,\overline{1,4,1,4,1,3}}^{4,3,\overline{4,1,4,1,3,1}}] \text{ (Type 1)} \text{ and } A_{2j}^{i}=[a_0;a_1,\dots,a_n,_{4,2,\overline{1,4,1,4,1,3}}^{\overline{4,1,4,1,3,1}}]\text{ (Type 5)},
\]
which is represented by the following diagram:
\begin{center}
    \begin{tikzpicture}
  \def\hsep{30mm}

  \tikzset{
    box/.style = {draw, thick, rounded corners=3pt, inner sep=4pt, font=\small, align=center},
    arrow/.style = {-{Stealth[length=6pt,width=6pt]}, thick},
  }

  \node (P) at (0,0) {$\T[a_0,\dots,a_n,4]$};
  \node (C1) at ($(P.east)+(\hsep,0.8)$) {$\T[a_0,\dots,a_n,4,_1^2]$};
  \node (C2) at ($(P.east)+(\hsep,-0.8)$) {$\T[a_0,\dots,a_n,4,3]$};

  \draw[arrow] (P.east) to[out=0,in=180] (C1.west);
  \draw[arrow] (P.east) to[out=0,in=180] (C2.west);

  \node[draw, thick, fit=(P)(C1)(C2), inner sep=8pt]{};
\end{tikzpicture}

\end{center}
\item
The interval 
$
A_{j}^{i-1}=[a_0;a_1,\dots,a_n,_{4,2,\overline{1,4,1,4,1,3}}^{\overline{4,1,4,1,3,1}}]
$
is subdivided into
\[
A_{2j-1}^{i}=[a_0;a_1,\dots,a_n,_{4,2,\overline{1,4,1,4,1,3}}^{4,2,\overline{4,1,4,1,3,1}}] \text{ (Type 1)} \text{ and } A_{2j}^{i}=[a_0;a_1,\dots,a_n,_{4,1,\overline{1,4,1,4,1,3}}^{4,1,\overline{4,1,3,1,4,1}}]\text{ (Type 6)},
\]
which is represented by the following diagram:
\begin{center}
    \begin{tikzpicture}
  \def\hsep{30mm}

  \tikzset{
    box/.style = {draw, thick, rounded corners=3pt, inner sep=4pt, font=\small, align=center},
    arrow/.style = {-{Stealth[length=6pt,width=6pt]}, thick},
  }

  \node (P) at (0,0) {$\T[a_0,\dots,a_n,4,_1^2]$};
  \node (C1) at ($(P.east)+(\hsep,0.8)$) {$\T[a_0,\dots,a_n,4,1]$};
  \node (C2) at ($(P.east)+(\hsep,-0.8)$) {$\T[a_0,\dots,a_n,4,2]$};

  \draw[arrow] (P.east) to[out=0,in=180] (C1.west);
  \draw[arrow] (P.east) to[out=0,in=180] (C2.west);

  \node[draw, thick, fit=(P)(C1)(C2), inner sep=8pt]{};
\end{tikzpicture}

\end{center}
\item
The interval 
$
A_{j}^{i-1}=[a_0;a_1,\dots,a_n,_{4,1,\overline{1,4,1,4,1,3}}^{\overline{4,1,4,1,3,1}}]
$
is subdivided into
\[
A_{2j-1}^{i}=[a_0;a_1,\dots,a_n,_{4,1,\overline{1,4,1,4,1,3}}^{4,1,\overline{3,1,4,1,4,1}}] \text{ (Type 2)} \text{ and } 
A_{2j}^{i}=[a_0;a_1,\dots,a_n,_{4,1,4,\overline{3,1,4,1,4,1}}^{4,1,\overline{4,1,3,1,4,1}}]\text{ (Type 7)},
\]
which is represented by the following diagram:
\begin{center}
    \begin{tikzpicture}
  \def\hsep{30mm}

  \tikzset{
    box/.style = {draw, thick, rounded corners=3pt, inner sep=4pt, font=\small, align=center},
    arrow/.style = {-{Stealth[length=6pt,width=6pt]}, thick},
  }

  \node (P) at (0,0) {$\T[a_0,\dots,a_n,4,1]$};
  \node (C1) at ($(P.east)+(\hsep,0.8)$) {$\T[a_0,\dots,a_n,4,1,_1^3]$};
  \node (C2) at ($(P.east)+(\hsep,-0.8)$) {$\T[a_0,\dots,a_n,4,1,4,_1^3]$};

  \draw[arrow] (P.east) to[out=0,in=180] (C1.west);
  \draw[arrow] (P.east) to[out=0,in=180] (C2.west);

  \node[draw, thick, fit=(P)(C1)(C2), inner sep=8pt]{};
\end{tikzpicture}

\end{center}

\item
The interval 
$
A_{j}^{i-1}=[a_0;a_1,\dots,a_n,_{4,1,4,\overline{3,1,4,1,4,1}}^{\overline{4,1,4,1,3,1}}]
$
is subdivided into
\[
A_{2j-1}^{i}=[a_0;a_1,\dots,a_n,_{4,1,4,\overline{3,1,4,1,4,1}}^{4,1,4,3,\overline{4,1,4,1,3,1}}] \text{ (Type 1)} \text{ and } A_{2j}^{i}=[a_0;a_1,\dots,a_n,_{4,1,4,2,\overline{1,4,1,4,1,3}}^{\overline{4,1,4,1,3,1}}]\text{ (Type 8)},
\]
which is represented by the following diagram:
\begin{center}
    \begin{tikzpicture}
  \def\hsep{30mm}

  \tikzset{
    box/.style = {draw, thick, rounded corners=3pt, inner sep=4pt, font=\small, align=center},
    arrow/.style = {-{Stealth[length=6pt,width=6pt]}, thick},
  }

  \node (P) at (0,0) {$\T[a_0,\dots,a_n,4,1,4,_1^3]$};
  \node (C1) at ($(P.east)+(\hsep,0.8)$) {$\T[a_0,\dots,a_n,4,1,4,_1^2]$};
  \node (C2) at ($(P.east)+(\hsep,-0.8)$) {$\T[a_0,\dots,a_n,4,1,4,3]$};

  \draw[arrow] (P.east) to[out=0,in=180] (C1.west);
  \draw[arrow] (P.east) to[out=0,in=180] (C2.west);

  \node[draw, thick, fit=(P)(C1)(C2), inner sep=8pt]{};
\end{tikzpicture}

\end{center}
\item
The interval 
$
A_{j}^{i-1}=[a_0;a_1,\dots,a_n,_{4,1,4,2,\overline{1,4,1,4,1,3}}^{\overline{4,1,4,1,3,1}}]
$
is subdivided into
\[
A_{2j-1}^{i}=[a_0;a_1,\dots,a_n,_{4,1,4,2,\overline{1,4,1,4,1,3}}^{4,1,4,2,\overline{4,1,4,1,3,1}}] \text{ (Type 1)} \text{ and } A_{2j}^{i}=[a_0;a_1,\dots,a_n,_{4,1,4,1,\overline{1,4,1,4,1,3}}^{\overline{4,1,4,1,3,1}}]\text{ (Type 9)},
\]
which is represented by the following diagram:
\begin{center}
    \begin{tikzpicture}
  \def\hsep{30mm}

  \tikzset{
    box/.style = {draw, thick, rounded corners=3pt, inner sep=4pt, font=\small, align=center},
    arrow/.style = {-{Stealth[length=6pt,width=6pt]}, thick},
  }

  \node (P) at (0,0) {$\T[a_0,\dots,a_n,4,1,4,_1^2]$};
  \node (C1) at ($(P.east)+(\hsep,0.8)$) {$\T[a_0,\dots,a_n,4,1,4,1]$};
  \node (C2) at ($(P.east)+(\hsep,-0.8)$) {$\T[a_0,\dots,a_n,4,1,4,2]$};

  \draw[arrow] (P.east) to[out=0,in=180] (C1.west);
  \draw[arrow] (P.east) to[out=0,in=180] (C2.west);

  \node[draw, thick, fit=(P)(C1)(C2), inner sep=8pt]{};
\end{tikzpicture}

\end{center}

\item
The interval 
$
A_j^{i-1}=[a_0;a_1,\dots,a_n,_{4,1,4,1,\overline{1,4,1,4,1,3}}^{\overline{4,1,4,1,3,1}}]
$
is subdivided into
\[
A_{2j-1}^i=[a_0;a_1,\dots,a_n,_{4,1,4,1,\overline{1,4,1,4,1,3}}^{4,1,4,1,2,\overline{1,4,1,4,1,3}}] \text{ (Type 3) and } 
A^i_{2j}=[a_0;a_1,\dots,a_n,_{4,1,4,1,3,\overline{4,1,4,1,3,1}}^{\overline{4,1,4,1,3,1}}]\text{ (Type 1)},
\]
which is represented by the following diagram:
\begin{center}
    \begin{tikzpicture}
  \def\hsep{30mm}

  \tikzset{
    box/.style = {draw, thick, rounded corners=3pt, inner sep=4pt, font=\small, align=center},
    arrow/.style = {-{Stealth[length=6pt,width=6pt]}, thick},
  }

  \node (P) at (0,0) {$\T[a_0,\dots,a_n,4,1,4,1]$};
  \node (C1) at ($(P.east)+(\hsep,0.8)$) {$\T[a_0,\dots,a_n,4,1,4,1,_1^2]$};
  \node (C2) at ($(P.east)+(\hsep,-0.8)$) {$\T[a_0,\dots,a_n,4,1,4,1,3]$};

  \draw[arrow] (P.east) to[out=0,in=180] (C1.west);
  \draw[arrow] (P.east) to[out=0,in=180] (C2.west);

  \node[draw, thick, fit=(P)(C1)(C2), inner sep=8pt]{};
\end{tikzpicture}

\end{center}

\end{enumerate}
This subdivision process continues indefinitely, as summarized in the table below, each interval type is converted into two subintervals of specific types.
\vskip0.3cm
\begin{tabular}{|C{8.0cm}|C{4.0cm}|C{4.0cm}|}
\thickhline
\multicolumn{3}{|c|}{\Large\bfseries Segments Division List} \\  
\thickhline
\makecell{\textbf{Segment Type}} &
\makecell{\textbf{Child 1}\\\textbf{Type}} &
\makecell{\textbf{Child 2}\\\textbf{Type}} \\
\medhline

Type 1  & Type 2 & Type 4 \\ \hline
Type 2  & Type 3 & Type 1 \\ \hline
Type 3  & Type 1 & Type 1 \\ \hline
Type 4  & Type 1 & Type 5 \\ \hline
Type 5  & Type 1 & Type 6 \\ \hline
Type 6  & Type 2 & Type 7 \\ \hline
Type 7  & Type 1 & Type 8 \\ \hline
Type 8  & Type 1 & Type 9 \\ \hline
Type 9  & Type 3 & Type 1 \\ \hline

\thickhline
\end{tabular}
\begin{remark}
\label{rem:subdivision_rules}
\mbox{}\\
Starting from $\T[4,3]$, the aforementioned rules  divide $\T[a_0,a_1,\dots,a_n]$ into $\T[a_0,a_1,\dots,a_n,a_{n+1}]$ for every $a_{n+1}, (a_{n+1}=1,2,3,4)$ for which this is nonempty. We analyze every possible tail and illustrate their division process:\\
If $\T[a_0,a_1,\dots,a_n]$ is of the ``special" forms:\\
\[
\begin{aligned}
&\T[a_0,a_1,\dots,a_{n-1},4] \text{ (Type 4)},\text{ then apply rules 4, 5.} \\
&\T[a_0,a_1,\dots,a_{n-2},4,1]\text{ (Type 6)},\text{ then apply rules 6, 2, 3.} \\
&\T[a_0,a_1,\dots,a_{n-3},4,1,4]\text{ (Type 7)}\footnotemark,\text{ then apply rules 7, 8.}\\
&\T[a_0,a_1,\dots,a_{n-4},4,1,4,1]\text{ (Type 9)},\text{ then apply rules 9,3.} 
\end{aligned}
\]\footnotetext{The reason why $T[a_1,\dots,a_n,4,1,4]$ is of type 7 is because it is equal to $T[a_1,\dots,a_n,4,1,4,^{3}_{1}]$, as $T[a_1,\dots,a_n,4,1,4,4]$ is empty (by definition, there can't be consecutive 4's in the continued fraction of any number in $\F$).}\\
Otherwise, we apply Rules 1, 2, and 3 consecutively.\\
$\text{}$\\
For example, we consider the division of the initial segment $A_1^0=\T[4,3]$. We note that it is of Type $1$. By Rule $1$ above, we divide it into $\T[4,3,_{1}^{3}]$ of Type $2$ and $\T[4,3,4]$ of Type $4$. Then, by Rule $2$, we divide $\T[4,3,_{1}^{3}]$ into $\T[4,3,_{1}^{2}]$ of Type 3 and $\T[4,3,3]$ of Type $1$. Finally, by Rule $3$, we divide $\T[4,3,_{1}^{2}]$ into $\T[4,3,1]$ and $\T[4,3,2]$, both of Type $1$. We have thus split $\T[4,3]$ into:
$$\bigcup_{k=1}^{4}\T[4,3,k].$$
This is illustrated in the following diagram.
\begin{center}
\begin{tikzpicture}[thick, x=0.11\textwidth, y=1cm,font=\footnotesize]
 \tikzset{
    seg/.style = {line width=1.2pt, draw=black},
    dot/.style = {draw=black, fill=black, circle, inner sep=1.6pt},
    endlbl/.style = {below=3pt, font=\small, align=center},
    toplbl/.style = {above=4pt, font=\small\bfseries},
    arrowstyle/.style={-{Stealth[length=6pt,width=5pt]}, semithick},
    rulelab/.style = {font=\small, text=black, align=right}
  }
\draw[draw=black] (-1.5,1) rectangle ++(10,-7);

  \foreach \i in {0,...,7} \coordinate (x\i) at (1.16*\i,0);

  \coordinate (y0) at (0,0);      
  \coordinate (y1) at (0,-1.8);   
  \coordinate (y2) at (0,-3.6);   
  \coordinate (y3) at (0,-5.2);   

  \draw[seg] (x0 |- y0) -- (x7 |- y0);
  \node[dot] at (x0 |- y0) {};
  \node[dot] at (x7 |- y0) {};
  \node[toplbl] at ($0.5*(x0|-y0)+0.5*(x7|-y0)$) {$\T[4,3]$};

  \draw[seg] (x0 |- y1) -- (x5 |- y1);
  \node[dot] at (x0 |- y1) {};
  \node[dot] at (x6 |- y1) {};
  \node[toplbl] at ($0.5*(x0|-y1)+0.5*(x6|-y1)$) {$\T[4,3,_1^3]$};
  \draw[seg] (x6 |- y1) -- (x7 |- y1);
  \node[dot] at (x5 |- y1) {};
  \node[dot] at (x7 |- y1) {};
  \node[toplbl] at ($0.5*(x6|-y1)+0.5*(x7|-y1)$) {$\T[4,3,4]$};

  \draw[seg] (x0 |- y2) -- (x3 |- y2);
  \node[dot] at (x0 |- y2) {};
  \node[dot] at (x3 |- y2) {};
  \node[toplbl] at ($0.5*(x0|-y2)+0.5*(x3|-y2)$) {$\T[4,3,_1^2]$};

  \draw[seg] (x4 |- y2) -- (x5 |- y2);
  \node[dot] at (x4 |- y2) {};
  \node[dot] at (x5 |- y2) {};
  \node[toplbl] at ($0.5*(x4|-y2)+0.5*(x5|-y2)$) {$\T[4,3,3]$};

  \draw[seg] (x0 |- y3) -- (x1 |- y3);
  \node[dot] at (x0 |- y3) {};
  \node[dot] at (x1 |- y3) {};
  \node[toplbl] at ($0.5*(x0|-y3)+0.5*(x1|-y3)$) {$\T[4,3,1]$};

  \draw[seg] (x2 |- y3) -- (x3 |- y3);
  \node[dot] at (x2 |- y3) {};
  \node[dot] at (x3 |- y3) {};
  \node[toplbl] at ($0.5*(x2|-y3)+0.5*(x3|-y3)$) {$\T[4,3,2]$};
  
  \draw[dashed, gray!40] ($(y0)+(-0.5,0.6)$) -- ($(y3)+(-0.5,-0.6)$);
  \draw[arrowstyle, cyan!100] ($(y0)+(-0.8,0)$) -- ($(y0)+(-0.8,-1.5)$) node[midway,left,rulelab] {Rule 1};
  \draw[arrowstyle, green!100] ($(y1)+(-0.8,0)$) -- ($(y1)+(-0.8,-1.5)$) node[midway,left,rulelab] {Rule 2};
  \draw[arrowstyle, orange!100] ($(y2)+(-0.8,0)$) -- ($(y2)+(-0.8,-1.5)$) node[midway,left,rulelab] {Rule 3};

\end{tikzpicture}
\end{center}
\end{remark}
\vskip+0.3cm
\begin{lemma}\label{lem:inclusion}
    $\A=\C=\F$.
\end{lemma}
\begin{proof}
\leavevmode
\\
By Remark \ref{rem:subdivision_rules}, if a segment $A_j^i$ is equal to some $\T[a_0,a_1,a_2,\dots a_n]$, then for every non-empty $\T[a_0,a_1,a_2,\dots a_n,k]$ (where $k\in\{1,2,3,4\}$), there will exist a descendant of $A_j^i$ such that it is equal to $\T[a_0,a_1,a_2,\dots a_n,k]$. Formally, if $\T[a_0,a_1,a_2,\dots a_n]= A_j^i\subseteq \A_i$ then for every non-empty $\T[a_0,\dots,a_n,k]$, there exists some $s,\nu$ such that $ \T[a_0,a_1,a_2,\dots a_n,k]=A_\nu^{i+s}\subseteq A_{i+s}\subseteq A_{i+1}$. Consequently, if $C_n\subseteq A_i$, then $C_{n+1}\subseteq A_{i+1}$. Since $A_0=\T[4,3]=C_2\subseteq A_0$, we obtain inductively that for every $\nu\in\Z_{\geq0}$, $C_{\nu+2}\subseteq A_\nu$. Letting $\nu \to \infty$ we get $\C\subseteq \A$.
\\
We must now show that $\A \subseteq \C$.
Recall that if a segment $A_j^i$ is equal to some $\T[a_0,\dots,a_n]$, then in at most 3 steps, all children of $A_j^i$ will be some subset of $C_{n+1}$. That is, $\left(A_j^i\cap A_{i+3}\right)\subseteq C_{n+1}$.\\
Now we will prove that if $A_i\subseteq C_n$, then $A_{i+3}\subseteq C_{n+1}$.\\
Assume $A_i\subseteq C_n$. Then, for all $j\in\{1,2,\dots,2^i\}$, we have $A_j^i\subseteq C_n$. Remark \ref{rem:subdivision_rules} implies that either $A_j^i$ itself, or an ancestor at most three generations above it, is equal to some $\T[a_0,\dots,a_n]\subseteq C_n$. Hence there exists some $s\in\{0,1,2,3\}$ and $\nu\in\Z_{\geq0}$ such that $A_\nu^{i-s}=\T[a_0,\dots,a_n]$. Using the conclusion obtained in the previous paragraph, we have $\left(A_\nu^{i-s}\cap A_{i-s+3}\right)\subseteq C_{n+1}$. Since $A_{i+3}\subseteq A_{i-s+3}$ and $A_j^i\subseteq A_\nu^{i-s}$, we further conclude that $\left(A_j^{i}\cap A_{i+3}\right)\subseteq \left(A_\nu^{i-s}\cap A_{i+3}\right)\subseteq \left(A_\nu^{i-s}\cap A_{i-s+3}\right)\subseteq C_{n+1}$. This means in at most 3 steps all children of $A_j^i$ will become subsets of $C_{n+1}$. 
Since $A_j^i$ was chosen arbitrarily, for all $j\in\{1,2,\dots2^i\}$ we have $(A_j^{i}\cap A_{i+3})\subseteq C_{n+1}$, thus
\[
A_{i+3}=A_{i}\cap A_{i+3}=\left(\bigcup_{j=1}^{2^i} A_j^i \right)\cap A_{i+3}=\bigcup_{j=1}^{2^i} \left(A_j^i\cap A_{i+3}\right)\subseteq C_{n+1}.
\]
Finally, we notice that $A_0=C_2\subseteq C_2$. Using the above observation, we obtain that for all $n\in\Z_{\geq0}$, $A_{3n}\subseteq C_{n+2}$. Now letting $n\to \infty$ we get $\A\subseteq \C$.
\end{proof}
\begin{corollary}
$\F$ is a Cantor-type set.
\end{corollary}
\begin{proof}
\leavevmode
Note that $\A$ is a Cantor-type set by definition, so $\F=\A$ has the same property.
\end{proof}
Our next goal is to prove that $\tau(\F) > 1$, to do this we use the following lemma.\\
\begin{lemma}
\label{lemma:comptrick}
Let $a,b,c,d$ be finite or infinite blocks of continued fractions and $n\in\Z_{\geq1}$.
\[
\begin{aligned}
&\text{If } 
           A_{2j-1}^{i} = [a_0; a_1, \dots, a_n, _{a}^{b}],\ 
           A_{2j}^{i} = [a_0; a_1, \dots, a_n, _{c}^{d}],\ 
            G_{j}^{i} = (\max A_{2j-1}^i,\min A_{2j}^i).\\ 
&\text{Then }
\max\left\{
\frac{\left|G_{j}^{i} \right|}{|A_{2j-1}^{i}|},\ 
\frac{\left|G_{j}^{i} \right|}{|A_{2j}^{i}|}
\right\} \leq
\max\left\{\frac{(\left[a\right] + 1)(\left[c\right] - [b])}{(\left[c\right] + 1)([b] - \left[a\right])}, 	\frac{(5[d]+1)(\left[c\right] - [b])}{(5[b]+1)([d] - \left[c\right])}
\right\}.
\end{aligned}
\]
\end{lemma}

\begin{proof}[Proof of Lemma \ref{lemma:comptrick}]
\mbox{}\\
We denote $\frac{p_k}{q_k} = [a_0; a_1, \dots, a_k]$. The recurrence relation of the sequence $\{q_k\}_{k_{\geq0}}$ is well-known: $q_{-1}=0,\ q_0=1$ and $q_{k}=a_{k}q_{k-1}+q_{k-2}$ for all $k\geq1$.\\
Let $\varepsilon_k = \frac{q_{k-1}}{q_k}$. Then we get $\varepsilon_0=0$ and $\varepsilon_k=\frac{1}{a_k+\frac{q_{k-2}}{q_{k-1}}}=\frac{1}{a_k+\varepsilon_{k-1}}$ for all $k\geq 1$.\\
Since
\[
\varepsilon_k=\frac{1}{a_k+\varepsilon_{k-1}}\leq\frac{1}{1+0}=1
,\,\,\,\,
\varepsilon_k=\frac{1}{a_k+\varepsilon_{k-1}}\geq\frac{1}{4+1   }=\frac15,
\]
we see that $\varepsilon_k\in[\frac15,1]$ for all $k\geq1$.\\
Because $\F\subset\T[4,3]$, every continued fraction involved in the process has $a_0,\dots,a_n$ at least of length 2, i.e. $n\geq1$.
\[
\begin{aligned}
|G_{j}^{i}| = \sup G_{j}^{i} - \inf G_{j}^{i}
= \left| \frac{p_n[c] + p_{n-1}}{q_n[c] + q_{n-1}} - \frac{p_n[b] + p_{n-1}}{q_n[b] + q_{n-1}} \right| = \frac{[c] - [b]}{q_n^2 ([c] + \varepsilon_n)([b] + \varepsilon_n)},
\end{aligned}
\]
Similarly, we get 
\[
\begin{aligned}
|A_{2j-1}^{i}| = \frac{[b] - \left[a\right]}{q_n^2 ([b] + \varepsilon_n)(\left[a\right] + \varepsilon_n)},\
|A_{2j}^{i}| = \frac{[d] - \left[c\right]}{q_n^2 ([d] + \varepsilon_n)(\left[c\right] + \varepsilon_n)}.
\end{aligned}
\]
By monotonicity, we deduce that 
\[
\begin{aligned}
\frac{\left|G_{j}^{i} \right|}{|A_{2j-1}^{i}|} 
&= \frac{([a] + \varepsilon_n)(\left[c\right] - [b])}{(\left[c\right] + \varepsilon_n)([b] - \left[a\right])} 
\overset{(\varepsilon_n\le 1)}{\leq} \frac{(\left[a\right] + 1)(\left[c\right] - [b])}{(\left[c\right] + 1)([b] - \left[a\right])},\\
\frac{\left|G_{j}^{i} \right|}{|A_{2j}^{i}|} 
&= \frac{([d] + \varepsilon_n)([c] - [b])}{([b] + \varepsilon_n)([d] - \left[c\right])} 
\overset{(\varepsilon_n\ge \frac15)}{\leq} \frac{(5[d]+1)(\left[c\right] - [b])}{(5[b]+1)([d] - \left[c\right])}.
\end{aligned}
\]
Thus,
\[
\max\left\{
\frac{\left|G_{j}^{i} \right|}{|A_{2j-1}^{i}|},\ 
\frac{\left|G_{j}^{i} \right|}{|A_{2j}^{i}|}
\right\} \leq\
\max\left\{	\frac{(\left[a\right] + 1)(\left[c\right] - [b])}{(\left[c\right] + 1)([b] - \left[a\right])}, \frac{(5[d]+1)(\left[c\right] - [b])}{(5[b]+1)([d] - \left[c\right])}
\right\}.
\]
\end{proof}
\mbox{}\\
We now compute a lower bound for the thickness of $\F$ using the lemma above, considering each type of segment.
\begin{enumerate}
    \item[Type 1:]
    \[
    \max\!\left\{
      \frac{|G_{j}^{i}|}{|A_{2j}^{i}|},\,
      \frac{|G_{j}^{i}|}{|A_{2j-1}^{i}|}
    \right\}
    \le
    \max\!\left\{
      \frac{-98845 + 678\sqrt{26565}}{168340},\;
      \frac{1714 + 16935\sqrt{26565}}{2895005}
    \right\}
    \le 0.964.
    \]

    \item[Type 2:]
    \[
    \max\!\left\{
      \frac{|G_{j}^{i}|}{|A_{2j}^{i}|},\,
      \frac{|G_{j}^{i}|}{|A_{2j-1}^{i}|}
    \right\}
    \le
    \max\!\left\{
      \frac{-3102208 + 28527\sqrt{26565}}{12625009},\;
      \frac{734627 + 22099\sqrt{26565}}{5347148}
    \right\}
    \le 0.811.
    \]

    \item[Type 3:]
    \[
    \max\!\left\{
      \frac{|G_{j}^{i}|}{|A_{2j}^{i}|},\,
      \frac{|G_{j}^{i}|}{|A_{2j-1}^{i}|}
    \right\}
    \le
    \max\!\left\{
      \frac{-327577 + 11883\sqrt{26565}}{3835324},\;
      \frac{142707 + 31409\sqrt{26565}}{5780868}
    \right\}
    \le 0.911.
    \]

    \item[Type 4:]
    \[
    \max\!\left\{
      \frac{|G_{j}^{i}|}{|A_{2j}^{i}|},\,
      \frac{|G_{j}^{i}|}{|A_{2j-1}^{i}|}
    \right\}
    \le
    \max\!\left\{
      \frac{734627 + 22099\sqrt{26565}}{5347148},\;
      \frac{871925 + 10719\sqrt{26565}}{26671920}
    \right\}
    \le 0.811.
    \]

    \item[Type 5:]
    \[
    \max\!\left\{
      \frac{|G_{j}^{i}|}{|A_{2j}^{i}|},\,
      \frac{|G_{j}^{i}|}{|A_{2j-1}^{i}|}
    \right\}
    \le
    \max\!\left\{
      \frac{142707 + 31409\sqrt{26565}}{5780868},\;
      \frac{47481125 + 242609\sqrt{26565}}{241501492}
    \right\}
    \le 0.911.
    \]

    \item[Type 6:]
    \[
    \max\!\left\{
      \frac{|G_{j}^{i}|}{|A_{2j}^{i}|},\,
      \frac{|G_{j}^{i}|}{|A_{2j-1}^{i}|}
    \right\}
    \le
    \max\!\left\{
      \frac{-1760165 + 12317\sqrt{26565}}{3740264},\;
      \frac{228339 + 83497\sqrt{26565}}{14071116}
    \right\}
    \le 0.984.
    \]

    \item[Type 7:]
    \[
    \max\!\left\{
      \frac{|G_{j}^{i}|}{|A_{2j}^{i}|},\,
      \frac{|G_{j}^{i}|}{|A_{2j-1}^{i}|}
    \right\}
    \le
    \max\!\left\{
      \begin{aligned}
      &\frac{124744719 + 3698485\sqrt{26565}}{897620716},\; \\
      &\frac{5862625949 + 36639108\sqrt{26565}}{113824090231}
      \end{aligned}
    \right\}
    \le 0.811.
    \]

    \item[Type 8:]
    \[
    \max\!\left\{
      \frac{|G_{j}^{i}|}{|A_{2j}^{i}|},\,
      \frac{|G_{j}^{i}|}{|A_{2j-1}^{i}|}
    \right\}
    \le
    \max\!\left\{
      \frac{24202879 + 5279855\sqrt{26565}}{973087996},\;
      \frac{61036589 + 376740\sqrt{26565}}{301409236}
    \right\}
    \le 0.910.
    \]

    \item[Type 9:]
    \[
    \max\!\left\{
      \frac{|G_{j}^{i}|}{|A_{2j}^{i}|},\,
      \frac{|G_{j}^{i}|}{|A_{2j-1}^{i}|}
    \right\}
    \le
    \max\!\left\{
      \begin{aligned}
      &\frac{-2678249986 + 26487231\sqrt{26565}}{13906866931},\; \\
      & \frac{7015092919 + 43130255\sqrt{26565}}{18089104276}
      \end{aligned}
    \right\}
    \le 0.777.
    \]
\end{enumerate}
So from the estimates above, we get  the inequality 
$$\max\!\left\{
      \frac{|G_{j}^{i}|}{|A_{2j}^{i}|},\,
      \frac{|G_{j}^{i}|}{|A_{2j-1}^{i}|}
    \right\}\le \frac{228339 + 83497\sqrt{26565}}{14071116}\approx 0.984.
    $$

\begin{corollary}
    \label{cor:F44}
    The set 
    $\F$ has thickness 
\begin{equation}\label{lambda}
   \tau(\F)\geq \frac{1}{\lambda}:=\frac{83497\sqrt{26565}-228339}{13158329} \approx 1.0169.      
\end{equation}

\end{corollary}
\begin{proof}
We can see that
$$\min\!\left\{
      \frac{|A_{2j}^{i}|}{|G_{j}^{i}|},\,
      \frac{|A_{2j-1}^{i}|}{|G_{j}^{i}|}
    \right\}=\frac{1}{\max\!\left\{
      \frac{|G_{j}^{i}|}{|A_{2j}^{i}|},\,
      \frac{|G_{j}^{i}|}{|A_{2j-1}^{i}|}
    \right\}}\geq\frac{1}{\frac{228339 + 83497\sqrt{26565}}{14071116}}=\frac{83497\sqrt{26565}-228339}{13158329}.$$
By definition, we have
$$\tau(\F)\geq \min\!\left\{
      \frac{|A_{2j}^{i}|}{|G_{j}^{i}|},\,
      \frac{|A_{2j-1}^{i}|}{|G_{j}^{i}|}
    \right\}\geq\frac{83497\sqrt{26565}-228339}{13158329},$$
thus proving the corollary.
\end{proof}
This completes the proof of Theorem \ref{thm:F4}.

\end{proof}
\begin{theorem}
    \label{thm:logF4}
    $
    \tau (\log \F)>1.
    $
\end{theorem}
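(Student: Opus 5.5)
Under $\log$, the Cantor structure of $\F$ is carried over directly: the gaps of $\log\F$ are the images $\log G^i_j$, and the bridges adjacent to them are $\log A^i_{2j-1}$ and $\log A^i_{2j}$. So it suffices to prove, for every subdivision step,
\[
\frac{|\log G_j^i|}{|\log A_{2j-1}^i|}<1,\qquad \frac{|\log G_j^i|}{|\log A_{2j}^i|}<1,
\]
with a uniform margin. I would work with the distortion of $\log$ across adjacent intervals. Write $A_{2j-1}^i=[u,v]$, $G_j^i=(v,w)$, $A_{2j}^i=[w,z]$ in increasing order (the even-$n$ case is symmetric). By the mean value theorem, $|\log G|=|G|/\xi$ with $\xi\in(v,w)$, and $|\log A_{2j-1}|=|A_{2j-1}|/\eta$ with $\eta\in(u,v)$. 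Hence
\[
\frac{|\log G|}{|\log A_{2j-1}|}\le \frac{|G|}{|A_{2j-1}|}\cdot\frac{v}{v}\cdot 1\le \frac{|G|}{|A_{2j-1}|},
\]
because $\eta\le v\le\xi$. So the left bridge is harmless: $\log$ shrinks the gap relative to the bridge on its left. For the right bridge,
\[
\frac{|\log G|}{|\log A_{2j}|}\le \frac{|G|}{|A_{2j}|}\cdot\frac{z}{v}\le \lambda\cdot\frac{\max\F}{\min\F},
\]
where $\lambda\approx 0.984$ is the bound from Corollary~\ref{cor:F44}. All points of $\F$ lie in $\T[4,3]$, i.e.\ between $[4;3,\overline{1,4,1,4,1,3}]$ and $[4;3,\overline{4,1,4,1,3,1}]$, so $z/v\le \max\F/\min\F$. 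This ratio is about $1+0.06/4.3$, roughly $1.014$.

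The main obstacle is that $0.984\times 1.014\approx 0.998$, which is uncomfortably close to $1$, and this crude bound wastes the fact that $z/v$ is much closer to $1$ than the global ratio. I would refine in two ways. First, $z/v\le 1+|G\cup A_{2j}|/\min\F$, and for a segment at depth $i\ge 1$ the length of $A^{i-1}_j$ is at most a fraction of $|\T[4,3]|$. At depth $0$ (Type~1 applied to $\T[4,3]$) I would compute the exact ratio directly instead of using the $\varepsilon_n$ bound, since $\varepsilon_1=1/3$ is known. Second, the worst case $\lambda$ comes only from Type~6, and only from the right-bridge term. For Types with bound $\le 0.964$, multiplying by $1.014$ already gives something below $0.98<1$. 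For Type~6 segments, which sit at depth $\ge 2$ inside $\T[4,3,\dots,4,1]$, the ratio $z/v$ is at most $1+|\T[4,3,4]|/4.3$, which is tiny (order $10^{-3}$). That keeps the product below $1$.

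Concretely, I would: (i) record $\min\F,\max\F$ from the Type~1 endpoints of $\T[4,3]$; (ii) prove the mean-value inequality above; (iii) check the depth-$0$ and depth-$1$ subdivisions of $\T[4,3]$ by explicit computation with the known endpoint continued fractions; and (iv) for deeper segments, bound $z/v-1$ by $|A^{i-1}_j|/\min\F\le|\T[4,3,k]|/4$ and multiply by the type-specific bounds. This gives a uniform constant strictly less than $1$, so $\tau(\log\F)>1$. The delicate step is (iii) together with the Type~6 estimate, where the margin is smallest.
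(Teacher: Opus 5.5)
Your proof is correct and is essentially the paper's argument. Both start from the gap/bridge bound $\lambda\approx 0.9834$ of Corollary~\ref{cor:F44}, observe that $\log$ only helps against the lower bridge, and absorb the distortion on the upper bridge using the fact that every segment lies in $\T[4,3]$, which has length $\approx 0.05$ and sits above $\min\F\approx 4.26$. You use the mean value theorem where the paper uses the exact quadratic criterion of Lemma~\ref{lem:log-interval}, and your version has the advantage of giving an explicit uniform constant $\tau(\log\F)\ge 1/(\lambda\max\F/\min\F)\approx 1.005$.

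Your ``main obstacle'' is not actually an obstacle. The crude bound already gives $\lambda\cdot\max\F/\min\F\approx 0.9834\cdot 1.0119\approx 0.995<1$, and even your overestimate $0.998$ is below $1$. So you can drop steps (iii)--(iv), together with the unjustified claim that every Type~6 parent has length at most $|\T[4,3,4]|$.
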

In order to prove this statement, we use the following lemma.
\begin{lemma}\label{lem:log-interval}
Let
\[
J_1=(a,a+r),\qquad J_2=(a+r,a+r+s),\qquad J_3=(a+r+s,a+r+s+t)
\]
be some intervals
with \(a,r,s,t>0\). If
\begin{equation}\label{eq:s-condition}
s\le \min\!\left\{ r,\; \frac{2t}{\sqrt{1+\dfrac{4t}{a+r}}+1}\right\},
\end{equation}
then
\[
\lvert \log J_2\rvert \le \min\{\lvert \log J_1\rvert,\;\lvert \log J_3\rvert\}.
\]
\end{lemma}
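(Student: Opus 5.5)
Write the three logarithmic lengths explicitly: $|\log J_1|=\log\frac{a+r}{a}$, $|\log J_2|=\log\frac{a+r+s}{a+r}$, $|\log J_3|=\log\frac{a+r+s+t}{a+r+s}$. Since $\log$ is increasing, the two required inequalities are equivalent to comparisons of ratios:
\[
\frac{a+r+s}{a+r}\le\frac{a+r}{a}
\quad\text{and}\quad
\frac{a+r+s}{a+r}\le\frac{a+r+s+t}{a+r+s}.
\]
I will treat the two comparisons separately, using one part of the minimum in \eqref{eq:s-condition} for each.

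\emph{The comparison with $J_1$.} The first inequality is equivalent to $1+\frac{s}{a+r}\le 1+\frac{r}{a}$, that is, $\frac{s}{a+r}\le\frac{r}{a}$. From $s\le r$ and $a+r>a>0$ we get $\frac{s}{a+r}\le \frac{r}{a+r}<\frac{r}{a}$. This settles $|\log J_2|\le|\log J_1|$. It is the intuitive statement that $\log$ is concave, so a later interval of no greater length has no greater log-length.

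\emph{The comparison with $J_3$.} Put $u=a+r>0$. Cross-multiplying positive quantities turns the second inequality into $(u+s)^2\le u(u+s+t)$, which expands to
\[
s^2+us-ut\le 0 .
\]
This is a quadratic in $s$ with positive root
\[
s_+=\frac{-u+\sqrt{u^2+4ut}}{2}=\frac{u}{2}\left(\sqrt{1+\tfrac{4t}{u}}-1\right).
\]
So the inequality holds exactly when $0<s\le s_+$. Rationalizing,
\[
s_+=\frac{u}{2}\cdot\frac{4t/u}{\sqrt{1+4t/u}+1}=\frac{2t}{\sqrt{1+\frac{4t}{a+r}}+1},
\]
which is precisely the second term of the minimum in \eqref{eq:s-condition}. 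Hence the hypothesis gives $s\le s_+$, and therefore $|\log J_2|\le|\log J_3|$.

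Combining the two comparisons gives the lemma. There is no genuine obstacle. The only points needing care are that all quantities are positive, so the cross-multiplications and the monotonicity of $\log$ are legitimate, and the algebraic identification of the quadratic's root with the rationalized form that appears in the hypothesis.
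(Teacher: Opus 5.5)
Your proof is correct and follows essentially the same route as the paper. For the comparison with $J_1$, both reduce the inequality to $as\le ar+r^2$, which follows from $s\le r$. For the comparison with $J_3$, both reduce it to the quadratic $s^2+(a+r)s-(a+r)t\le 0$, whose positive root, after rationalizing, is exactly the second term of the minimum in the hypothesis.
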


\begin{proof}
First we prove \(\lvert\log J_2\rvert \le \lvert\log J_1\rvert\). This inequality is equivalent to
\[
\log(a+r+s)-\log(a+r)\le \log(a+r)-\log a,
\]
which is equivalent to
$as \le a r + r^2.$
This last inequality certainly holds when \(s\le r\), so the first part of \eqref{eq:s-condition} implies
\(\lvert\log J_2\rvert \le \lvert\log J_1\rvert\).
Next we prove \(\lvert\log J_2\rvert \le \lvert\log J_3\rvert\). The inequality
\[
\log(a+r+s)-\log(a+r)\le \log(a+r+s+t)-\log(a+r+s)
\]
is equivalent to the quadratic inequality
\[
s^2+(a+r)s-(a+r)t \le 0.
\]
Solving this quadratic inequality for \(s\ge 0\), we get the upper bound
\[
s \le \frac{a+r}{2}\Big(\sqrt{1+\tfrac{4t}{a+r}}-1\Big)
= \frac{2t}{\sqrt{1+\dfrac{4t}{a+r}}+1}.
\]
Thus the second part of \eqref{eq:s-condition} guarantees \(\lvert\log J_{2}\rvert \le \lvert\log J_3\rvert\).
Combining the two estimates, we obtain the desired conclusion.
\end{proof}

\begin{proof}[Proof of Theorem \ref{thm:logF4}]

By Lemma~\ref{lem:log-interval}, it remains only to verify that for any \(i,j\in\mathbb{N}\), \(1\le j\le 2^{i}\), the lengths
\[
r=r_{i,j}:=|A_{2j-1}^{i}|,\qquad s=s_{i,j}:=|A^{i-1}_j|-|A_{2j-1}^{i}|-|A_{2j}^{i}|,\qquad t=t_{i,j}:=|A_{2j}^{i}|
\]
of the subdivisions of the segment $A_0$ satisfy \eqref{eq:s-condition}, where
$a=a_{i,j}:=\min A_{2j-1}^{i}\ge [4;3,\overline{1,4,1,4,1,3}]$.
In the course of the proof of \autoref{thm:F4}, it was established that
\[
s \le \lambda\cdot\min\{r,t\}\leq\lambda\cdot t,\text{ where }
\lambda=\frac{228339 + 83497\sqrt{26565}}{14071116} \approx 0.9834.\  
\]
We wish to prove
\[
s \le\frac{2t}{\sqrt{1+\dfrac{4t}{a+r}}+1}.
\]
If
\[
\lambda\cdot t\leq \frac{2t}{\sqrt{1+\dfrac{4t}{a+r}}+1}
\]
the inequality holds immediately.\\
Hence, it is sufficient to verify the condition of Lemma~\ref{lem:log-interval} only in the possible cases when
\[
\frac{2t}{\sqrt{1+\dfrac{4t}{a+r}}+1} < \lambda\cdot t.\]
We prove that this case is not possible. The above inequality is obviously equivalent to:
\[\frac{t}{a+r} > \gamma\]
where 
\[\gamma=\frac{188261210808537 - 1136812239479\sqrt{26565}}{173141622072241}\approx0.017.
\]
In other words, it suffices to verify the inequality for those segments \(A_{2j}^{i}\) whose lengths satisfy
\[
t > \gamma\cdot (a+r) \ge \gamma\cdot a \geq \gamma\cdot[4;3,\overline{1,4,1,4,1,3}] > 0.07.
\]
But since each of the segments $A_{2j}^i$ is a subinterval of $\A_0$, 
and 
\[
|\A_0|=[4;3,\overline{4,1,4,1,3,1}]-[4;3,\overline{1,4,1,4,1,3}]=\frac{5501 - \sqrt{26565}}{1238}-\frac{783 + \sqrt{26565}}{222}\approx0.05,
\]
we conclude that no $A^i_{2j}$ will have length larger than 0.05, so this case cannot be achieved. Therefore $\tau(\log\F)>1.$
\end{proof}

\section{Proof of Theorem \ref{thm:main}}
\begin{proof}
We apply Corollary \ref{cor:logset} and Theorem \ref{thm:logF4} to obtain 
\[
\F\cdot\F=\left[[4;3,\overline{1,4,1,4,1,3}]^2,[4;3,\overline{4,1,4,1,3,1}]^2\right]=\left[\frac{106609+261\sqrt{26565}}{8214},\frac{15143783-5501\sqrt{26565}}{766322}\right].
\]
So
\[
\left[\mubound,10+6\sqrt{2}\right]\subset \F\cdot\F.
\]
\medskip
Therefore any number $\alpha\in\left[\mubound,10+6\sqrt{2}\right]$ can be represented in the form
\[
\alpha = [x_0;x_1,\dots,x_n,\dots]\cdot[y_0;y_1,\dots,y_n,\dots],
\]
where
$[x_0;x_1,\dots,x_n,\dots]\in\F$ and $[y_0;y_1,\dots,y_n,\dots]\in\F$.\\
Let
$\{n_i\}_{i\geq1}$ and $\{m_i\}_{i\geq1}$ be two strictly increasing sequences of natural numbers such that $x_{n_i}\neq4$ and $y_{m_i}\neq4$ for all $i\geq 1$ (by definition there will exist such sequences), and let
\[
S_i=(x_{n_i},x_{n_{i}+1},\dots,x_{1},x_0,y_0,y_1,\dots,y_{m_{i}-1},y_{m_i}),\,\,\,\,\,
x=[S_1,S_2,S_3\dots].
\]
Let \(k_i\) denote the index in the continued fraction of \(x\) that corresponds to the occurrence of the element \(x_0\) coming from the block \(S_i\). We observe that:
\[
\lim_{i\to\infty} \rho_{k_i}(x)
= \lim_{i\to\infty}\bigl[\,x_0;x_1,\dots,x_{n_i},S_i^*,S_{i-1}^*,\dots\,\bigr]\cdot
\bigl[\,y_0;y_1,\dots,y_{m_i},S_{i+1},\dots\,\bigr]=
\]
\[
= [x_0;x_1,\dots]\cdot[y_0;y_1,\dots] = \alpha,
\]
where \(S_i^*\) denotes $
[y_{m_i},y_{m_{i-1}},\dots,y_0,x_0,\dots,x_{n_{i-1}},x_{n_i}].
$
\\
We now demonstrate that the string $(4,1,4,1,4)$ cannot appear within the continued fraction expansion of $x$. By construction, this string is absent from the block $(x_0, x_1, \dots, x_{n_i})$. Because $(4,1,4,1,4)$ is palindromic, it must also be absent from the reversed block $(x_{n_i}, x_{n_i-1}, \dots, x_0)$. Similarly, it is excluded from $(y_0, y_1, \dots, y_{m_i})$ by definition. Furthermore, the string cannot bridge these adjacent blocks. It cannot span the junction $(x_0, y_0) = (4,4)$ because $(4,1,4,1,4)$ does not contain consecutive $4$s. Nor can it span the junction $(y_{m_i}, x_{n_{i+1}})$; the string $(4,1,4,1,4)$ strictly alternates $4$s with a single non-$4$ character, yet our initial assumptions dictate that both $y_{m_i} \neq 4$ and $x_{n_{i+1}} \neq 4$, meaning there would be consecutive non-$4$ characters at this boundary. Recall that $[\overline{4;1,4,1,3,1}]$ is the largest element in $\F$, and $[\overline{3;1,4,1,4,1}]$ is the largest element in $\F$ with a non-4 integer part. Since $\{k_i\}_{i\ge1}$ is the sequence indicating the indices of all appearances of $(4,4)$ in the continued fraction expansion of $x$, and $(4,1,4,1,4)$ never occurs in $x$, any sequence of indices $\{k'_i\}_{i\ge1}$ that shares at most finitely many terms with $\{k_i\}_{i\ge1}$ must satisfy one of the following conditions for all sufficiently large $i$:$$\alpha_{k'_i+1} \leq [\overline{4;1,4,1,3,1}] \quad \text{and} \quad \alpha^{*}_{k'_i} \leq [\overline{3;1,4,1,4,1}],$$or conversely,$$\alpha_{k'_i+1} \leq [\overline{3;1,4,1,4,1}] \quad \text{and} \quad \alpha^{*}_{k'_i} \leq [\overline{4;1,4,1,3,1}].$$
Consequently, we obtain$$\limsup_{i\to\infty} \rho_{k'_i}(x)\le [\overline{4;1,4,1,3,1}]\,[\overline{3;1,4,1,4,1}]=\mubound\le\alpha=\lim_{i\to\infty} \rho_{k_i}(x).$$It follows that$$\alpha=\lim_{i\to\infty}\rho_{k_i}(x)=\limsup_{n\to\infty}\rho_n(x)\in\dirich.$$Since $\alpha$ was chosen as an arbitrary element of $\left[\mubound,10+6\sqrt{2}\right]$, this completes the proof of Theorem \ref{thm:main}.\end{proof}

\section*{Acknowledgements}
The authors are grateful to Prof. Nikolay Moshchevitin, Dr. Yang Bohan, and Luo Zichen for their invaluable guidance, encouragement, and support throughout the preparation of this article.


\end{document}